\newtheorem{thm}{Theorem}
\crefname{thm}{Theorem}{Theorems}
\crefname{cor}{Corollary}{Corollaries}
\newtheorem{lem}[thm]{Lemma}
\crefname{lem}{Lemma}{Lemmas}
\newtheorem{prop}[thm]{Proposition}
\crefname{prop}{Proposition}{Propositions}
\crefname{conj}{Conjecture}{Conjectures}
\crefname{ques}{Question}{Questions}
\theoremstyle{definition}
\crefname{defn}{Definition}{Definitions}
\newtheorem{rem}[thm]{Remark}
\crefname{rem}{Remark}{Remarks}
\newtheorem{ex}[thm]{Example}
\crefname{ex}{Example}{Examples}
\crefname{obs}{Observation}{Observations}
\crefname{claim}{Claim}{Claims}
\crefname{ass}{Assumption}{Assumptions}
\numberwithin{thm}{section}
\newcommand{\cE}{\ensuremath{\mathcal E}}
\newcommand{\bbR}{{\ensuremath{\mathbb R}} }
\newcommand{\diri}{\mathcal{E}}
\renewcommand{\a}{{\ensuremath{\alpha}}}
\let\oldd\d
\renewcommand{\d}{{\ensuremath{\delta}}}
\newcommand{\E}{{\mathcal{E}}}
\let\oldk\k
\renewcommand{\k}{{\ensuremath{\kappa}}}
\let\oldl\l
\renewcommand{\l}{{\ensuremath{\lambda}}}
\let\oldL\L
\renewcommand{\L}{{\ensuremath{\Lambda}}}
\let\oldo\o
\renewcommand{\o}{{\ensuremath{\omega}}}
\let\oldO\O
\renewcommand{\O}{{\ensuremath{\Omega}}}
\let\oldr\r
\renewcommand{\r}{{\ensuremath{\rho}}}
\newcommand{\s}{{\ensuremath{\sigma}}}
\let\oldt\t
\renewcommand{\t}{{\ensuremath{\tau}}}
\let\oldu\u
\renewcommand{\u}{{\ensuremath{\upsilon}}}
\renewcommand{\leq}{\leqslant}
\renewcommand{\geq}{\geqslant}
\renewcommand{\le}{\leqslant}
\renewcommand{\ge}{\geqslant}
\renewcommand{\to}{\rightarrow}
\DeclareMathOperator*{\argmin}{arg\,min}
\begin{document}
\title{The normal contraction property for non-bilinear Dirichlet forms}
\author{Giovanni Brigati\thanks{\textsf{brigati@ceremade.dauphine.fr}} }
\author{Ivailo Hartarsky\thanks{\textsf{hartarsky@ceremade.dauphine.fr}}}
\affil{CEREMADE, CNRS, Universit\'e Paris-Dauphine, PSL University\protect\\75016 Paris, France}
\date{\vspace{-0.25cm}\today}
\maketitle
\vspace{-0.75cm}
\begin{abstract}
We analyse the class of convex functionals $\cE$ over $\mathrm{L}^2(X,m)$ for a measure space $(X,m)$ introduced by Cipriani and Grillo \cite{cipriani2003nonlinear} and generalising the classic bilinear Dirichlet forms. We investigate whether such non-bilinear forms verify the normal contraction property, i.e., if $\cE(\phi \circ f) \leq \cE(f)$ for all $f \in \mathrm{L}^2(X,m)$, and all 1-Lipschitz functions $\phi: \mathbb R \to \mathbb R$ with $\phi(0)=0$. We prove that normal contraction holds if and only if $\cE$ is symmetric in the sense $\cE(-f) = \cE(f),$ for all $f \in \mathrm{L}^2(X,m).$  An auxiliary result, which may be of independent interest, states that it suffices to establish the normal contraction property only for a simple two-parameter family of functions $\phi$.
\end{abstract}

\noindent\textbf{MSC2020:} Primary 31C45; Secondary 47H20, 31C25, 46E36, 35K55.
\\
\textbf{Keywords:} Non-bilinear Dirichlet form, Dirichlet form, nonlinear semigroup, gradient flow, normal contraction.

\section{Introduction}
\label{sec:intro}
\subsection{Setting}
\label{subsec:intro.disucssion}
\subsubsection{Bilinear Dirichlet forms}
Bilinear Dirichlet forms are a well-established topic, related to the theory of Markov processes and semigroups, see \cite{bouleau1991dirichlet,fukushima2011dirichlet,ma2012introduction}. Let $X$ be a nonempty set, let $\mathcal{F}$ be a $\sigma-$algebra over $X$, and take a $\sigma-$finite measure $m: \mathcal{F} \to [0,\infty]$. Let $\Lambda : D(\Lambda) \times D(\Lambda) \to \mathbb R,$ be a symmetric, bilinear, and positive semi-definite form, such that $D(\Lambda) \subset \mathrm{L}^2(X,m)$ is dense.
If the form is \textit{closed}, there exists a unique self-adjoint, positive operator $A : D(A) \to \mathrm{L}^2(X,m),$ such that $D(A) \subset D(\Lambda),$ and 
\[\langle A \, f,g\rangle
= \Lambda(f,g), \quad \forall f \in D(A
),g\in D(\Lambda).\]
Adopting the notation of functional calculus, we also have the formulae $D(\Lambda) = D(A^{1/2})$, and $\Lambda(f,g) = \langle A^{1/2} \, f, \, A^{1/2} \, g \rangle, \quad \forall f,g \in D(\Lambda).$ The bilinear form $\Lambda$ is called a (bilinear) Dirichlet form if 
\[\Lambda(1 \wedge f \vee 0, 1 \wedge f \vee 0) \leq \Lambda(f,f), \quad \forall f\in D(\Lambda).\]
By extension, the term Dirichlet form also refers to the quadratic form 
\[\E(f) = \begin{cases}
     \frac{1}{2} \Lambda(f,f), \quad {if } f \in D(\Lambda); \\
     +\infty, \quad \text{otherwise;}
\end{cases}\]
associated with a bilinear Dirichlet form $\Lambda.$ 
This functional turns out to be always non-negative, convex (since it is quadratic), and lower semicontinuous. Moreover, the subdifferential satisfies $\partial \E = A$.
\subsubsection{Non-bilinear Dirichlet forms}
We next turn to defining non-bilinear Dirichlet forms as they will be studied in the present work. Let $\cE : \mathrm{L}^2(X,m) \to [0,\infty]$ be a convex and l.s.c.\ functional. In all the paper we assume that $\E$ is not the constant $+\infty$. Let $(T_t)_{t\geq0}$ be the semigroup of nonlinear operators generated by $-\partial \cE$, where $\partial$ denotes the subdifferential operator, via the differential equation
\begin{equation}
\label{gradientflow}
\begin{cases}
    \partial_t T_t \,f  \in - \partial \E( T_t \, f),&\forall t \in (0,\infty), \quad \forall u \in \mathrm{L}^2(X,m),\\
    T_0 \,f = f,&\forall f \in \mathrm{L}^2(X,m). 
\end{cases}    
\end{equation}
\Cref{gradientflow} is well-posed for all $f\in\mathrm{L}^2(X,m)$. Its solution is usually called the gradient flow of $\E$ starting at $f$. See \cite{ambrosio2008gradient,brezis1973operateurs} and refer to \cref{subsec:biback} for more background.

We say that a non-negative l.s.c.\ functional $\E$ is a \emph{non-bilinear Dirichlet form} if $\E$ is convex and, for all $t \geq 0,$ the operator $T_t: \mathrm{L}^2(X,m) \to \mathrm{L}^2(X,m)$ verifies
\begin{enumerate}
    \item \emph{order preservation}: $T_t \, f \leq T_t \, g$ for all $f,g \in \mathrm{L}^2(X,m)$ such that $f \leq g$ (for the pointwise order  up to a negligible set); 
    \item \emph{$\mathrm{L}^\infty$-contraction}: $\|T_t \, f - T_t \, g \|_\infty \leq \|f-g\|_\infty$ for all $f,g \in \mathrm{L}^2(X,m).$
\end{enumerate}
This class of forms was introduced by Cipriani and Grillo \cite{cipriani2003nonlinear} and we will provide an equivalent ``static'' definition in \cref{thm1} without reference to the underlying semigroup (also see \cref{th:cipriani}).

Our main goal is to verify the normal contraction property for non-bilinear Dirichlet forms. A \emph{normal contraction} is a $1-$Lipschitz function $\phi: \mathbb R \to \mathbb R,$ such that $\phi(0)=0.$ We denote by $\Phi$ the set of all normal contractions. We say that a functional $\E$ over $\mathrm{L}^2(X,m)$ has the \emph{normal contraction property} if
\begin{equation}
    \label{eq:normalcontraction}
\E(\phi(f)) \leq \E(f),\quad \forall \phi \in \Phi,\quad  \forall f \in \mathrm{L}^2(X,m).    
\end{equation}
In the literature this property goes also under the name of \textit{Second Beurling-Deny Criterion} since \cite{reed1978methods}.
\subsection{Background}
\subsubsection{Bilinear setting}\label{subsec:biback}
Aside their interest in probability, for which we refer to the bibliography of \cite{bouleau1991dirichlet,fukushima2011dirichlet,ma2012introduction}, bilinear Dirichlet forms are also well-linked with linear diffusion equations and semigroups, see \cite{bakry1985diffusions,dolbeault2008bakry}. This link gave fruitful results in the theory of metric measure spaces, allowing for an intrinsic/Eulerian approach towards Ricci curvature bounds, \cite{ambrosio2015bakry}. Under mild hypotheses, the authors of \cite{ambrosio2015bakry} could represent any bilinear Dirichlet form $\E$ as a quadratic Cheeger's energy on the base space $X$. One important point is that Ambrosio, Gigli, and Savar\'e were able to create an appropriate notion of distance $d_{\E}$ directly from the Dirichlet form $\E$. Then, via a condition \textit{\`a la Bakry-Emery,} on the \textit{carr\'e du champ} associated with the quadratic form $\E$, the authors give a sense to notions such as Bochner's inequality or a lower bound on the Ricci curvature. Their approach is equivalent to that of Lott and Villani \cite{lott2009ricci} and Sturm \cite{sturm2006geometryI,sturm2006geometryII}, based on optimal transport. 
The creation of a distance from a bilinear form is a technique present also in \cite{biroli1995sobolev}. 
Bilinear Dirichlet forms also play a role in potential and capacity theory, see \cite{fukushima2011dirichlet,schmidt2017energy}. 

Historically, bilinear Dirichlet forms have been introduced by Beurling and Deny in \cite{beurling1959dirichlet}.
One motivation behind their definition was the fact that being a bilinear Dirichlet form was sufficient to have the normal contraction property (see \cref{eq:normalcontraction}). The fact that controlling one normal contraction is necessary and sufficient to control all of them is nowadays known as the \textit{Beurling-Deny criterion}.
To prove such a property, one usually approximates the function $f$ with weighted sums of characteristic functions. The normal contraction property is a cornerstone for many purposes. For instance, for the development of a differential calculus \cite{ambrosio2015bakry} and the classification of linear Markov semigroups \cite{fukushima2011dirichlet}, both based on bilinear Dirichlet forms.

\subsubsection{Non-bilinear setting}
Generalising the concept of Dirichlet form to a non-bilinear setting is a more recent problem, started with the two works \cite{biroli2005strongly,cipriani2003nonlinear}. A different kind of generalisation is that of \cite{jost1998nonlinear}, but we will not focus on it, since its purpose is different. Using instruments from \cite{barthelemy1996invariance,benilan1979quelques,brezis1973operateurs}, Cipriani and Grillo \cite{cipriani2003nonlinear} provided two equivalent definitions of a non-bilinear Dirichlet form relevant to us, which will be discussed in further detail in \cref{sec:characterisation}. In \cite{cipriani2003nonlinear}, a number of properties of the class of non-bilinear Dirichlet forms are given, in particular with respect to $\Gamma-$convergence (see \cite{dal1993introduction}). 

Two recent works on the topic are \cite{claus2021nonlinear,claus2021energy}, where Claus recovers many structural properties for non-bilinear Dirichlet forms, among which we find a nonlinear Beurling--Deny principle, see \cite[Theorem 2.39]{claus2021nonlinear}.  In the following sections, he develops a nonlinear theory of capacity. Furthermore, in \cite[Corollary 2.40]{claus2021nonlinear} (also see \cite[Theorem 3.22]{claus2021energy}), the normal contraction property is proved for non-bilinear Dirichlet forms, but only for \emph{non-decreasing} normal contractions and additionally assuming that the form is $0$ at $0$ (we avert the reader that in \cite[Definition 2.31]{claus2021nonlinear} non-decreasing normal contractions are named simply normal contractions).

\paragraph{Examples} Let us mention two classes of basic examples, which generalise corresponding families of local and nonlocal bilinear Dirichlet forms. These lie at the core of the functionals analysed in the references quoted at the end of the section. Let $\Omega$ be an open subset of $\mathbb{R}^d$ and $f:\O\times\bbR^d\to\bbR$ be a Borel-measurable function.
Let
\begin{equation}
\label{eq:E:from:f}
\E(u) = \begin{cases}
        \int_\Omega f(x,Du) \, \mathrm{d}x&u \in \mathrm{W}^{1,2}_{\mathrm{loc}}(\Omega), \\
        +\infty&\text{otherwise.}
        \end{cases} 
\end{equation}
We have that $\E$ is a non-bilinear Dirichlet form if $f$ is non-negative, measurable in the first argument and convex and lower-semicontinuous in the second one. See \cite{de1983lower} for the lower semicontinuity of the functional, while the property of being a non-bilinear Dirichlet form can be inferred as in \cite[Theorem 4.1]{cipriani2003nonlinear}. In addition, $\E$ is symmetric if $f(\cdot,-v) = f(\cdot,v),$ for all $v \in \mathbb{R}^d.$ Finally, $\E$ is always local, due to the locality of $Du$ and the fact that $\E$ is an integral functional.
Among local forms, we can consider the following. 
\begin{ex}\label{ex:nec}
Let $\Omega = \mathbb{R}.$ Let $f(x,v) = \max(v,0)
$.
Then, the integral functional $\E$  associated to $f$ by \cref{eq:E:from:f} is a non-symmetric non-bilinear Dirichlet form, which does not satisfy the normal contraction property \cref{eq:normalcontraction} for the function $\phi=-\mathrm{id}$.
\end{ex}
In this class of local functionals we also have the distinguished subclass of Finsler metrics, where 
\[f(x,\cdot) = \|\cdot\|_x, \quad \forall x \in \Omega.\]
The form is bilinear if and only if, for all $x\in\Omega$, the norm $\|\cdot\|_x$ satisfies the parallelogram identity, see \cite[Chapter 5]{brezis2011functional}. 

Some non-local non-bilinear Dirichlet forms appear in \cite{creo2021fractional}, for example. In general we can say that any functional $\E$ of the form 
\[\E(u) = \int_{\Omega^2} \psi(u(x)-u(y)) \, \mathrm{d}x\,\mathrm{d}y,\quad \forall u \in \mathrm{L}^2(\Omega,\mathrm{d}x).\]
is a non-bilinear Dirichlet form for non-negative, l.s.c., convex $\psi$ such that $\psi(0) = 0.$ Lower semicontinuity of the functional comes from Fatou's Lemma, its convexity from the convexity of $\psi$. Finally, one can repeat the computations in \cite[Theorem 2]{hurtado2020non} to prove order-preservation and $\mathrm{L}^\infty-$contraction for the semigroup associated with $\E.$ 

In \cite{cipriani2003nonlinear}, some interesting examples are developed in detail, ranging from functionals from the calculus of variations to Sobolev seminorms in the context of $C^\star-$monomodules. The theory of \cite{cipriani2003nonlinear} can be applied to nonlinear diffusion equations (see \cite{creo2021fractional,feo2021anisotropic} and the references therein), analysis on graphs \cite{hofmann2021spectral,mugnolo2014semigroup}, and analysis on spaces with a very irregular geometry \cite{hinz2020sobolev,meinert2020partial}. Furthermore, Cheeger's energies on extended metric spaces are known to be non-bilinear Dirichlet forms \cite{ambrosio2014calculus}. We refer to \cite{ambrosio2013density,ambrosio2014metric,ambrosio2015bakry} for this theory, which originates from \cite{cheeger1999differentiability,shanmugalingam2000newtonian}. See also \cite{kell2016q,luise2021contraction} for more estimates and contraction properties of Cheeger's energies.

\subsection{Main results}
\label{subsec:intro.mainresults}
Our main result is the following.
\begin{thm}\label{th:contraction}
Let $\E$ be a non-bilinear Dirichlet form. Then $\E$ has the normal contraction property \cref{eq:normalcontraction} if and only if
\begin{equation}
    \label{eq:sym}
    \E(-f) \leq \E(f)\quad\forall f \in \mathrm{L}^2(X,m).
\end{equation}
\end{thm}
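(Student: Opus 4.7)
The ``only if'' direction is immediate: the function $\phi(x) = -x$ belongs to $\Phi$, so applying \cref{eq:normalcontraction} to it yields \cref{eq:sym}.

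For the converse, assume \cref{eq:sym}. Applying this inequality first to $f$ and then to $-f$ gives the full symmetry $\E(-f) = \E(f)$ for every $f \in \mathrm{L}^2(X,m)$. It then remains to prove $\E(\phi(f)) \leq \E(f)$ for every $\phi \in \Phi$ and every $f$. My plan rests on two ingredients, corresponding to the ``auxiliary'' and the ``main'' results announced in the abstract.

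First, I invoke the auxiliary reduction promised in the abstract: to establish \cref{eq:normalcontraction} for all $\phi\in\Phi$ it suffices to do so for a certain two-parameter subfamily $\{\phi_{a,b}\}\subset\Phi$ of ``simple'' (piecewise affine) normal contractions. This reduction should be purely variational, relying only on the convexity and lower semicontinuity of $\E$ on $\mathrm{L}^2(X,m)$, and not on the non-bilinear Dirichlet form structure. I would prove it by writing a general $\phi\in\Phi$ as an appropriate pointwise limit (or superposition) of members of the family $\{\phi_{a,b}\}$ and by transferring the inequality through this representation using convexity and Fatou-type lower semicontinuity.

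Second, I verify \cref{eq:normalcontraction} directly for each $\phi_{a,b}$ in the family. The family should be chosen so that each $\phi_{a,b}$ is either a non-decreasing normal contraction or the reflection $x\mapsto -\psi(x)$ of a non-decreasing normal contraction $\psi$. In the non-decreasing case, the nonlinear Beurling--Deny principle of Claus \cite[Corollary~2.40]{claus2021nonlinear} applies and yields $\E(\phi_{a,b}(f))\leq\E(f)$. In the reflected case, $\phi_{a,b} = -\psi$ with $\psi$ non-decreasing, and the derived symmetry combined with Claus's estimate gives
\[
\E(\phi_{a,b}(f)) = \E(-\psi(f)) = \E(\psi(f)) \leq \E(f),
\]
which closes the argument.

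The main obstacle will be the auxiliary reduction. A naive pointwise approximation of $\phi$ by piecewise affine functions does not interact well with the convex functional $\E$, because convexity alone only provides inequalities in one direction and the composition $f\mapsto \phi(f)$ is nonlinear in $\phi$. The delicate point is therefore to decompose not $\phi$ but rather its action on $f$, choosing the two-parameter family so that the bound for a general $\phi$ follows from convex combinations and monotone pointwise limits of the bounds for the $\phi_{a,b}$'s, with lower semicontinuity of $\E$ handling the limiting step. The necessity of the symmetry assumption is already illustrated by \cref{ex:nec}, which shows that without it the strategy above must fail, confirming that the symmetry hypothesis is not only sufficient but unavoidable.
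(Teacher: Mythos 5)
The ``only if'' direction and the overall architecture (reduce to a simple two-parameter family, verify that family by hand, pass to the limit by dominated convergence and lower semicontinuity) match the paper. But there is a genuine gap at the heart of your plan: you propose to choose the two-parameter family so that each member is either a non-decreasing normal contraction or the reflection $-\psi$ of one, and then to dispatch both cases via Claus's result for non-decreasing contractions plus symmetry. No such family can work. Every function that is monotone or a reflection of a monotone function is monotone, compositions of monotone functions are monotone, convex combinations of $1$-Lipschitz monotone functions cannot produce slope $\pm1$ with a sign change (e.g.\ $\tfrac12\psi_1-\tfrac12\psi_2$ has derivative in $[-\tfrac12,\tfrac12]$), and pointwise limits of monotone functions are monotone. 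Hence no combination of these operations starting from your family can reach non-monotone normal contractions such as $x\mapsto|x|$ or, more generally, the piecewise-linear zigzags $\phi_{x_1,\dots,x_k}$ with $|\phi'|=1$, which are the extreme points of $\Phi$ and must appear in any generating family. This is exactly why the paper emphasises that \cref{th:contraction} strictly strengthens \cite[Corollary~2.40]{claus2021nonlinear}: the genuinely new content is the contraction property for the non-monotone basic contractions.

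Concretely, the paper's generating set $G$ (\cref{lemma1}) consists of the tent functions $\phi_{x}\in F_1$ and the zigzags $\phi_{x_1,x_2}\in F_2$ (and their negatives), none of which is monotone, and the reduction of a general $\phi\in\Phi$ is by \emph{composition} (so that $\cE(\phi^1\circ\phi^2\circ f)\le\cE(\phi^2\circ f)\le\cE(f)$ chains), not by superposition. Verifying \cref{eq:normalcontraction} for $\phi_x$ and $\phi_{x_1,x_2}$ cannot be outsourced to Claus's monotone result; the paper does it in \cref{prop:one-cusp,prop:one-sided,prop:two-sided} using symmetry together with the inequalities \cref{eq:minmax,eq:HK} of the new characterisation \cref{thm1} (the operators $f\vee g$, $f\wedge g$ and $H_\a(f,g)$). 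To repair your proposal you would need to either reprove these non-monotone cases from the Dirichlet-form structure (which is the actual work of \cref{sec:contraction}) or exhibit a different mechanism producing non-monotone contractions from monotone ones, which the algebraic obstructions above rule out.
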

This theorem goes in the same direction as the well-established one for the bilinear case \cite{bouleau1991dirichlet,fukushima2011dirichlet,ma2012introduction}. We merely prove that a form will operate on all normal contractions, once it operates on the simplest one. Henceforth, we say that $\E$ is \emph{symmetric} if \cref{eq:sym} holds and, equivalently, $\cE(-f)=\cE(f)$ for all $f\in \mathrm{L}^2(X,m)$. As witnessed by \cref{ex:nec}, the necessary symmetry assumption \cref{eq:sym} needs to be made, since this non-bilinear Dirichlet form does not have the normal contraction property.

Let us highlight that \cref{th:contraction} may be viewed as a strengthening of the result of Claus \cite[Corollary 2.40]{claus2021nonlinear}, whose proof follows the far more conventional approach of \cite{benilan1991completely,benilan1979quelques}. The class of normal contractions we consider is richer and it controls, for example, the absolute value of the argument of the non-bilinear Dirichlet form, which can be very useful (see e.g.\ \cite{fukushima2011dirichlet}), as well as more complicated contractions.

In order to prove \cref{th:contraction}, we establish two results, both of which may be of independent interest. Firstly, we provide an equivalent characterisation of non-bilinear Dirichlet forms, which turns out to be more widely for our purposes than the other equivalent static characterisation of \cite[Theorem 3.8]{cipriani2003nonlinear}, recalled in \cref{th:cipriani}. To do so, we require a bit of notation. For all $f,g \in \mathrm{L}^2(X,m)$, and $\a\in[0,\infty)$ we denote by $f\vee g$ and $f\wedge g$ denote the pointwise maximum and minimum and set $H_\alpha(f,g)=(g-\alpha)\vee f\wedge(g+\alpha)$ (see \cref{fig:H}), that is,
\begin{equation}
\label{eq:def:H}
H_\a(f,g)(x)=\begin{cases}
g(x)-\a&f(x)-g(x)<-\a,\\
f(x)&f(x)-g(x)\in[-\a,\a],\\
g(x)+\a&f(x)-g(x)>\a.
\end{cases}
\end{equation}

\begin{figure}
    \centering
\begin{tikzpicture}[line cap=round,line join=round,>=triangle 45,x=0.7cm,y=.7cm]
\draw[->] (-2,0) -- (4,0) node[below] {$f(x)$};
\draw[->] (0,-1) -- (0,4) node[below right] {$H_\a(f,g)(x)$};
\draw[shift={(2,0)}] (0pt,2pt) -- (0pt,-2pt) node[below] {$g(x)$};
\draw[shift={(1,0)}] (0pt,2pt) -- (0pt,-2pt);
\draw[shift={(3,0)}] (0pt,2pt) -- (0pt,-2pt);
\draw (0pt,2pt) -- (0pt,-2pt) node[below left] {$0$};
\draw[<->] (1,0.5)--(3,0.5) node[above,midway] {$2\a$};
\draw[very thick] (-2,1)-- (1,1)--(3,3)--(4,3);
\draw[dashed] (0,0)--(1,1);
\end{tikzpicture}
    \caption{Graph of the function $H_\a(f,g)(x)$ for fixed $g(x)$.}
    \label{fig:H}
\end{figure}
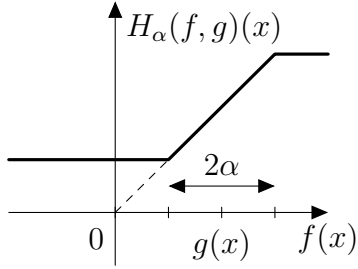

\begin{thm}\label{thm1}
Let $\E:\mathrm{L}^2(X,m)\to[0,\infty]$ be a 
l.s.c.\ functional. 
Then, $\mathcal E$ is a non-bilinear Dirichlet form if and only if, for all $f,g \in \mathrm{L}^2(X,m)$, and $\a\in[0,\infty)$, $\E$ verifies 
\begin{align}
\label{eq:minmax}
\cE(f\vee g)+\cE(f\wedge g)\le{}&\cE(f)+\cE(g),\\
\label{eq:HK}
\cE(H_\a(f,g))+\cE(H_\a(g,f))\le{}&\cE(f)+\cE(g).
\end{align}
\end{thm}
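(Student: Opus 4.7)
The plan is to decouple the two properties defining a non-bilinear Dirichlet form and prove separately:
\textbf{(a)} $(T_t)_{t\ge 0}$ is order-preserving iff \cref{eq:minmax}, and
\textbf{(b)} $(T_t)_{t\ge 0}$ is $\mathrm{L}^\infty$-contractive iff \cref{eq:HK}.

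For the nontrivial direction (``$\Leftarrow$'') of each, I would work in the doubled space $\mathrm{L}^2(X,m)^2$ with the sum functional $\widetilde{\cE}(u,v)=\cE(u)+\cE(v)$, whose resolvent is $\widetilde{J}_\lambda=(J_\lambda,J_\lambda)$, where $J_\lambda$ is the Moreau-Yosida resolvent of $\partial\cE$. For (a) I would show that the closed convex cone $C_{\le}=\{(u,v):u\le v\}$ is invariant under $\widetilde J_\lambda$, using the operation $R_{\le}(u,v)=(u\wedge v,u\vee v)$; for (b), for each $\alpha\ge 0$, I would show that $C_\alpha=\{(u,v):|u-v|\le \alpha\}$ is invariant, using $R_\alpha(u,v)=(H_\alpha(u,v),H_\alpha(v,u))$. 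Both operations act as the identity on the corresponding set.

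Given $(f,g)$ in the chosen set and $(u,v)=(J_\lambda f,J_\lambda g)$, the plan is to set $(u',v')=R(u,v)$ and sum the two Moreau-Yosida minimality inequalities associated to $J_\lambda f$ and $J_\lambda g$. Applying \cref{eq:minmax} or \cref{eq:HK} to $(u,v)$ cancels the $\cE$ contributions, leaving
\[\|u-f\|^2+\|v-g\|^2\le \|u'-f\|^2+\|v'-g\|^2.\]
A case-by-case pointwise computation on the sign of $u-v$ (and, for $R_\alpha$, on $|u-v|-\alpha$) yields the reverse pointwise inequality whenever $(f,g)$ lies in the set. For $R_{\le}$ the relevant identity reduces to $2(u-v)(g-f)\ge 0$; for $R_\alpha$ it reduces to $2[\alpha-(f-g)][(u-v)-\alpha]\ge 0$ (and a mirror image). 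Equality then forces, by strict convexity of the Moreau-Yosida functional, $(u',v')=(u,v)$, i.e.\ $(u,v)$ is in the set. Passing from resolvent to semigroup invariance uses the Crandall-Liggett exponential formula $T_t=\lim_n(J_{t/n})^n$. For the ``$\Rightarrow$'' directions, I would invoke the alternative static characterisation \cref{th:cipriani} of Cipriani-Grillo and specialise its truncation parameters to recover the lattice and $H_\alpha$ operations.

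The main obstacle, I expect, is the pointwise computation in the $R_\alpha$ case. Unlike $R_{\le}$, the operation $R_\alpha$ is not a retraction of $\mathrm{L}^2(X,m)^2$ onto $C_\alpha$: when $|u-v|\gg \alpha$, the image can lie far outside $C_\alpha$. The argument succeeds only because $R_\alpha$ restricts to the identity on $C_\alpha$ and the sign of the pointwise expression is correct precisely when $|f-g|\le \alpha$, which is exactly the regime in which Moreau-Yosida comparison is applied. The three subcases $u-v<-\alpha$, $|u-v|\le \alpha$, and $u-v>\alpha$ each require separate verification and careful bookkeeping of the factorisations above.
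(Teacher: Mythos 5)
Your route is genuinely different from the paper's. The paper treats \cref{thm1} as a corollary of the known static characterisation \cref{th:cipriani}: it invokes Barth\'elemy's abstract equivalence (\cref{thmb3}) as a black box, and the only new work is verifying the twist condition \cref{eq10} for $h(u,v)=H_\alpha(u,v)$, $k(u,v)=H_\alpha(v,u)$, after which \cref{eq:prcr2} becomes equivalent to convexity plus \cref{eq:HK} (and \cref{eq:prcr1} is handled by \cref{thm:minmax}). You instead propose to reprove the relevant invariance criterion from scratch by a Moreau--Yosida comparison in the doubled space. Your pointwise computations are correct ($2(u-v)(f-g)\le 0$ on $\{u>v\}$ when $f\le g$, and $2(u-v-\alpha)(f-g-\alpha)\le 0$ on $\{u-v>\alpha\}$ when $|f-g|\le\alpha$), as is the observation that one only needs $R_\alpha$ to fix $C_\alpha$ pointwise rather than retract onto it. In effect you are reproving the special cases of \cref{thmb3} that the paper cites; this buys self-containedness at the price of redoing known work.

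There are, however, two genuine gaps. First, convexity: the theorem assumes only that $\cE$ is l.s.c., whereas a non-bilinear Dirichlet form is convex by definition, and your entire ``$\Leftarrow$'' argument presupposes convexity --- without it $J_\lambda$ need not be single-valued or even defined, and you explicitly invoke strict convexity of the Moreau--Yosida functional and the exponential formula for $T_t$. You must therefore first extract convexity from \cref{eq:minmax,eq:HK}, and this is not cosmetic: neither condition visibly yields midpoint convexity $2\cE(\tfrac12(f+g))\le\cE(f)+\cE(g)$ (at $\alpha=0$ the inequality \cref{eq:HK} is vacuous since $H_0(f,g)=g$, and \cref{eq:minmax} never evaluates $\cE$ at $\tfrac12(f+g)$), whereas \cref{eq:prcr2} at $\alpha=0$ is exactly midpoint convexity. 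Producing convexity is precisely the nontrivial content of \cref{thmb3}/\cref{thm:minmax} that the paper imports. Second, the ``$\Rightarrow$'' direction is not a matter of ``specialising truncation parameters'' in \cref{th:cipriani}: the quantities appearing there are the \emph{midpoints} $P^1_1(f,g)=\tfrac12(f+f\wedge g)$ and $P^1_{2,\alpha}(f,g)=\tfrac12\bigl(f+H_\alpha(f,g)\bigr)$, not the endpoints $f\wedge g$ and $H_\alpha(f,g)$ themselves. Upgrading \cref{eq:prcr1,eq:prcr2} to \cref{eq:minmax,eq:HK} requires convexity together with either the identity $H_\alpha(f,g)=\tfrac12P^1_{2,\alpha}(f,g)+\tfrac12P^2_{2,\alpha}(g,f)$ (as in the paper's remark following \cref{thmb3}) or Barth\'elemy's iteration; for \cref{eq:minmax} no analogous one-line convex combination is available, since $f\wedge g$ is an extrapolation, not an interpolation, of $f$ and $P^1_1(f,g)$.
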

The advantage of \cref{thm1} as compared to \cref{th:cipriani} is that conditions \cref{eq:HK,eq:minmax} are easier to verify and useful to develop other functional inequalities such as the normal contraction property \cref{eq:normalcontraction}.

The second important step towards \cref{th:contraction} is a reduction.
\begin{lem}\label{lemma1}
Let $G$ be the set of all normal contractions $\phi \in \Phi$ such that $|\phi'|=1$ and $\phi'$ has at most two points of discontinuity.  Let $\langle G \rangle$ be the collection of all finite compositions of elements in $G.$ Then, $\langle G \rangle$ is dense in $\Phi$ for the pointwise convergence on $\mathbb R$. 
\end{lem}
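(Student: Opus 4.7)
The plan is to deduce the density from two sub-results: a sawtooth approximation, and an inductive decomposition.

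First, I would show that every $\phi \in \Phi$ is the pointwise limit of piecewise linear normal contractions $\phi_h$ with slopes $\pm 1$ and finitely many slope changes on every bounded set. Given $h>0$, partition $\bbR$ into intervals $[kh,(k+1)h]$, $k\in\bbZ$, and, on each interval, replace $\phi$ by a sawtooth whose slope alternates between $+1$ and $-1$, choosing the lengths of the up- and down-parts so that the net displacement over the interval matches $\phi((k+1)h)-\phi(kh)\in[-h,h]$ (which is admissible since $\phi$ is $1$-Lipschitz). The resulting $\phi_h$ satisfies $\phi_h(0)=0$, agrees with $\phi$ at every grid point $kh$, is $1$-Lipschitz, and differs from $\phi$ by at most $2h$ on each interval, so $\phi_h\to\phi$ uniformly on bounded subsets of $\bbR$ as $h\to 0$ and, in particular, pointwise.

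Second, I would show by induction on the number $n$ of slope changes that every piecewise linear $\psi\in\Phi$ with $|\psi'|=1$ a.e.\ and finitely many slope changes belongs to $\langle G\rangle$. For $n\leq 2$ this is immediate: $\psi\in G$ by definition. For $n\geq 3$ the plan is to write $\psi=\chi\circ V$, where $V\in G$ is a three-piece element whose two corners are placed at a carefully chosen pair of consecutive corners $(c_i,c_{i+1})$ of $\psi$ with slopes matching those of $\psi$ near them, and $\chi$ is a piecewise linear $\pm 1$-slope function with $\chi(0)=0$ and $n-2$ corners. By the induction hypothesis, $\chi\in\langle G\rangle$, and hence so is $\psi$. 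The slopes of $V$ are dictated by $\psi$, and the values of $\chi$ on the image of $V$ are read off from $\psi$ by inverting $V$ on its monotone pieces; the remaining $n-2$ slope changes of $\psi$ translate into $n-2$ corners of $\chi$, each located in a region where $V$ has a unique preimage.

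The choice of the pair $(c_i,c_{i+1})$ is the delicate point. The map $V$ has a middle region where it is three-to-one, and $\chi$ is well-defined only if $\psi$ takes the same value on all three preimages in each such fiber; the relative positions of $c_i$ and $c_{i+1}$ within the whole sequence of corners therefore matters. In the four-piece case $n=3$, a short direct computation shows that the pair $(c_1,c_2)$ works whenever $c_3\geq 2c_2-c_1$ and the pair $(c_2,c_3)$ works otherwise, the alternating slope structure of $\psi$ automatically guaranteeing the required fiber consistency in each sub-case. For general $n$, I expect a similar dichotomy to hold after choosing $(c_i,c_{i+1})$ at the pair enclosing a local extremum of minimal amplitude; the fiber consistency then reduces to the elementary observation that two monotone segments of $\psi$ with opposite slopes and matching lengths cancel.

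The main obstacle is to carry out this combinatorial case analysis rigorously for arbitrary $n$ and to verify that at least one admissible choice of $(c_i,c_{i+1})$ always yields a valid $\chi$ with $\chi(0)=0$, so that the induction closes. A possible workaround, if the minimal two-factor reduction runs into difficulty, is to allow the reduction step to use compositions of three or more elements of $G$ at once, or to exploit the flexibility of pointwise convergence to slightly perturb $\psi$ so as to avoid degenerate corner configurations while preserving the limit.
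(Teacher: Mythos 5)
Your two-step architecture (approximate by piecewise linear $\pm1$-slope contractions, then decompose those as finite compositions of elements with at most two corners) is the same as the paper's, but both steps as written have problems, and the second is a genuine gap that you yourself flag. First, a minor but real mismatch: your sawtooth approximants $\phi_h$ have, in general, infinitely many corners (at least one tooth per grid interval $[kh,(k+1)h]$, $k\in\mathbb{Z}$), whereas every element of $\langle G\rangle$ is a finite composition of functions with at most two corners and hence has only finitely many corners globally. So $\phi_h\notin\langle G\rangle$ and your inductive step cannot be applied to it. This is fixable (run the sawtooth construction only on $[-1/h,1/h]$ and extend with constant slope $\pm1$; the extension stays $1$-Lipschitz and pointwise convergence survives), and the paper sidesteps it by instead using the Lipschitz envelope $\phi_n(x)=\inf_{y\in Q_n}\phi(y)+|x-y|$ over finite sets $Q_n\uparrow\mathbb{Q}$, which automatically has finitely many corners and slopes $\pm1$.

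Second --- and this is the substantive gap --- the inductive decomposition $\psi=\chi\circ V$ is only verified for $n=3$, and you explicitly leave open both the selection rule for the corner pair and the fiber-consistency check for general $n$, which is precisely the heart of the lemma. The missing idea is to take $(c_i,c_{i+1})$ to be a pair of consecutive corners of $\psi$ with strictly minimal gap $c_{i+1}-c_i$ (perturbing and passing to a limit to break ties). The fold $V=\phi_{c_i,c_{i+1}}$ is three-to-one on a value interval of length $c_{i+1}-c_i$, whose full preimage is $[2c_i-c_{i+1},\,2c_{i+1}-c_i]$; strict minimality of the gap forces this preimage to contain no corner of $\psi$ other than $c_i$ and $c_{i+1}$. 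Since the slopes of $\psi$ alternate, $\psi$ and $V$ then have the same up--down--up pattern on that preimage, so $\psi$ is automatically constant on the fibers of $V$ there; the quotient $\chi$ is well defined, has exactly the remaining $n-2$ corners at explicitly computable positions, and satisfies $\chi(0)=\psi(0)=0$ because $V(0)=0$. This is exactly the content of \cref{prop:reduction} in the paper, whose proof verifies the factorisation by comparing the sign pattern of the derivatives on both sides. Without this (or an equivalent) selection rule and verification, your induction does not close, so the proposal as it stands is an incomplete plan rather than a proof.
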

We observe that the elements of $G$ are irreducible with respect to composition, so that $G$ is minimal in this sense. While the space $G$ is quite simple, proving the normal contraction property \cref{eq:normalcontraction} for $\phi\in G$ by hand from symmetry and \cref{eq:minmax,eq:HK} is still delicate, albeit elementary.

\subsection{Plan of the paper}
\label{subsec:intro.plan}
The remainder of the paper is structured as follows. In \cref{sec:characterisation}, we establish \cref{thm1}. In \cref{sec:contraction}, we prove \cref{th:contraction}, relying on \cref{thm1}. This is the heart of our work. Finally, we discuss future directions of research in \cref{sec:future}.

\section{Efficient equivalent characterisation of non-bilinear Dirichlet forms}
\label{sec:characterisation}
The goal of the present section is to prove \cref{thm1}.

\subsection{Preliminaries}
We introduce the subsets $C_1$ and $C_{2,\alpha}$, for $\alpha \in [0,\infty),$ of $\mathrm{L}^2(X,m;\mathbb{R}^2):$
\begin{align}
\label{eq:c1}
C_1 &{}= \left\{ (f,g) \in \mathrm{L}^2(X,m;\mathbb{R}^2) : f \leq g\right\}, \\
\label{eq:c2a}
C_{2,\alpha} &{}= \left\{ (f,g) \in \mathrm{L}^2(X,m;\mathbb{R}^2) : |f-g| \leq \alpha\right\}.
\end{align}
We notice that for all $\alpha$, the sets $C_1$ and $C_{2,\alpha}$ are convex and closed in the $\mathrm{L}^2-$topology. For any closed and convex subset $C$, the $1-$Lipschitz projection operator $P_C:\mathrm{L}^2(X,m;\mathbb{R}^2) \to C$ is defined by
\[P_C(f,g) = \argmin_{(w,z) \in C} \|f-w\|^2_2 +\|g-z\|^2_2.\]
The projection map sends any point $(f,g)$ to the closest point $P_C(f,g)$ in $C.$
We denote by $P^1_C$ and $P^2_C$ the two components of the projection operator in $\mathrm{L}^2(X,m).$ 
More properties of projection maps are studied in \cite{brezis2011functional}. 
If one considers the sets $C_1$ and $C_{2,\alpha}$, we have an explicit expression for the projections, thanks to \cite[Lemma 3.3]{cipriani2003nonlinear}:
\begin{align}
P_1(f,g) &{}= \left(f-\frac{1}{2}((f-g) \vee 0),g+\frac{1}{2}((f-g)\vee 0)\right),\\
P_{2,\alpha}(f,g) &{}= \left(g+\frac{1}{2}\varphi_\alpha\circ(f-g),f-\frac{1}{2}\varphi_\alpha\circ(f-g)\right),\label{eq:def:P2a}
\end{align}
where $\varphi_\a:\bbR\to\bbR$ is given by
\begin{equation}
\label{eq:def:phia}
\varphi_\alpha(z) = ((z+\alpha) \vee 0) +((z-\alpha)
\wedge 0) .
\end{equation}
We further recall \cite[Definition 3.1, Remark 3.2, Theorem 3.6]{cipriani2003nonlinear}. 
\begin{thm}
\label{th:cipriani}
Let $\E: \mathrm{L}^2(X,m) \to [0,\infty]$ be a l.s.c.\ functional. 
Then $\E$ is a non-bilinear Dirichlet form if and only if, for all $f,g\in\mathrm{L}^2(X,m)$ and $\alpha\in[0,\infty)$, $\E$ verifies
\begin{align}
    \label{eq:prcr1}
    \E\left(P^1_1(f,g)\right) + \E\left(P^2_1(f,g)\right) \leq \E(f) + \E(g),\\
    \label{eq:prcr2}
    \E\left(P^1_{2,\alpha}(f,g)\right) + \E\left(P^2_{2,\alpha}(f,g)\right) \leq \E(f) + \E(g).
\end{align}
\end{thm}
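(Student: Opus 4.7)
My plan is to reformulate the two semigroup conditions defining a non-bilinear Dirichlet form (order preservation and $\mathrm{L}^\infty$-contraction) as invariance properties for closed convex subsets of the product Hilbert space $\mathrm{L}^2(X,m;\mathbb{R}^2)$, and then apply Brezis's classical theorem relating invariance of closed convex sets under a gradient flow with an energy-decreasing property of the associated metric projection.

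The first step is to consider the product functional $\mathcal{F}:\mathrm{L}^2(X,m;\mathbb{R}^2)\to[0,\infty]$ defined by $\mathcal{F}(f,g)=\E(f)+\E(g)$. Since $\E$ is proper, convex, and l.s.c., so is $\mathcal{F}$, and by the standard decomposition of subdifferentials on product spaces one has $\partial\mathcal{F}=\partial\E\times\partial\E$. Consequently, the gradient flow of $\mathcal{F}$ in the sense of \eqref{gradientflow} coincides with the product semigroup $(T_t,T_t)$, where $T_t$ is the semigroup associated with $-\partial\E$. Under this identification, order preservation of $T_t$ is equivalent to the invariance of the closed convex cone $C_1$ defined in \eqref{eq:c1} under $(T_t,T_t)$, and $\mathrm{L}^\infty$-contraction of $T_t$ is equivalent to the invariance of every closed convex set $C_{2,\alpha}$ defined in \eqref{eq:c2a} under $(T_t,T_t)$. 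The latter uses that $\|T_tf-T_tg\|_\infty\leq\|f-g\|_\infty$ for arbitrary $f,g$ is recovered by specialising to $\alpha=\|f-g\|_\infty$, with nothing to prove when this quantity is infinite.

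The second step is to invoke Brezis's classical result \cite{brezis1973operateurs}: a nonempty closed convex set $C$ in a Hilbert space $H$ is invariant under the gradient flow of a proper convex l.s.c.\ functional $\mathcal{F}:H\to[0,\infty]$ if and only if $\mathcal{F}\circ P_C\leq\mathcal{F}$ pointwise on $H$. Applied with $C=C_1$ and $C=C_{2,\alpha}$, the invariance of these sets under $(T_t,T_t)$ is equivalent to
\[\mathcal{F}(P_{C_1}(f,g))\leq\mathcal{F}(f,g)\qquad\text{and}\qquad \mathcal{F}(P_{C_{2,\alpha}}(f,g))\leq\mathcal{F}(f,g)\]
for all $(f,g)\in\mathrm{L}^2(X,m;\mathbb{R}^2)$ and all $\alpha\in[0,\infty)$. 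Since $\mathcal{F}$ splits as a sum over the two coordinates, these two estimates are exactly conditions \eqref{eq:prcr1} and \eqref{eq:prcr2}, finishing the argument.

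The main delicate point is the correct identification of the gradient flow on the product Hilbert space with the pair $(T_t,T_t)$ and the precise invocation of Brezis's invariance theorem. In particular, one has to check that the pointwise $m$-a.e.\ definitions of $C_1$ and $C_{2,\alpha}$ are compatible with the strong $\mathrm{L}^2$-closedness required for the metric projection to be well-defined, and to carefully separate the (standard) convexity hypothesis on $\E$ from the (nontrivial) equivalence between the two variational conditions and the two semigroup properties. Once these technicalities are handled, the equivalence follows directly from the product structure of $\mathcal{F}$ combined with Brezis's invariance theorem.
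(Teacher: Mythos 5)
Your proposal is correct and follows essentially the same route as the paper, which itself only recalls this theorem from Cipriani--Grillo and sketches exactly the argument you give: pass to the product functional on $\mathrm{L}^2(X,m;\mathbb{R}^2)$, identify order preservation and $\mathrm{L}^\infty$-contraction with invariance of $C_1$ and of the family $C_{2,\alpha}$ under the product flow, and apply the Brezis--Barth\'elemy invariance criterion for closed convex sets. The only point worth making explicit for the converse direction is that convexity of $\E$ (needed before the semigroup even exists) follows from \cref{eq:prcr2} at $\alpha=0$, which reads $2\E\bigl(\tfrac{f+g}{2}\bigr)\le\E(f)+\E(g)$ and combines with lower semicontinuity to give convexity; you flag this separation of hypotheses but do not carry it out.
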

The key argument is the well-known fact from \cite{barthelemy1996invariance,brezis1973operateurs} stating that 
\[\E\left(P_C^1(f,g)\right) + \E\left(P_C^2(f,g)\right) \leq \E(f) + \E(g)\]
for all $f,g\in\mathrm{L}^2(X,m)$
if and only if the semigroup $T_t$ from \cref{gradientflow} preserves $C:$
\[T_t \, C \subset C, \quad \forall t \geq 0,\]
where $C$ can be any convex and closed set. 
Thus, \cref{eq:prcr1,eq:prcr2} correspond to the order-preservation and the $\mathrm{L}^\infty-$contraction properties for $(T_t)_t,$ respectively.
In \cite[Theorem 3.8]{cipriani2003nonlinear} one more step is made. 
\begin{thm}\label{thm:minmax}
Let $\E:\mathrm{L}^2(X,m) \to [0,\infty]$ be a l.s.c.\ functional.
Then, $\E$ satisfies  \cref{eq:minmax} if and only if $\cE$ is convex and satisfies \cref{eq:prcr1}.
\end{thm}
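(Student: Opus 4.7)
My plan is to address the two implications separately. The direction (convex $+$ \cref{eq:prcr1}) $\Rightarrow$ \cref{eq:minmax} is the substantial one; the converse follows from a short convexity calculation once the projections are made explicit.

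First I would simplify the projection formulas. Using $(f-g)\vee 0 = f-f\wedge g$ and $g+(f-g)\vee 0 = f\vee g$, the formulas for $P_1$ in \cite[Lemma 3.3]{cipriani2003nonlinear} recalled in the preliminaries reduce to
\[P^1_1(f,g) = \frac{f+f\wedge g}{2},\qquad P^2_1(f,g) = \frac{g+f\vee g}{2}.\]
This representation is central: it exhibits the projections as midpoints between $(f,g)$ and the sorted pair $(f\wedge g, f\vee g)$.

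For the easy direction (convex $+$ \cref{eq:minmax} $\Rightarrow$ \cref{eq:prcr1}), convexity of $\E$ applied to each midpoint yields
\[\E(P^1_1(f,g))\leq\frac{\E(f)+\E(f\wedge g)}{2},\qquad \E(P^2_1(f,g))\leq\frac{\E(g)+\E(f\vee g)}{2}.\]
Summing and bounding $\E(f\wedge g)+\E(f\vee g)\le\E(f)+\E(g)$ via \cref{eq:minmax} gives \cref{eq:prcr1}.

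For the hard direction (convex $+$ \cref{eq:prcr1} $\Rightarrow$ \cref{eq:minmax}), the plan is to invoke the Barth\'elemy--B\'enilan invariance criterion \cite{barthelemy1996invariance,brezis1973operateurs} on the product Hilbert space $\mathrm{L}^2(X,m)^2$, with the convex l.s.c.\ functional $(u,v)\mapsto\E(u)+\E(v)$ and the closed convex set $C_1$. This identifies \cref{eq:prcr1} with the invariance $T_t C_1\subset C_1$, i.e.\ with the order-preservation of the $\E$-semigroup. The next step is to translate order-preservation into a T-accretivity property of the subdifferential: for $u\le v$ in $D(\partial\E)$, one can select $p\in\partial\E(u)$ and $q\in\partial\E(v)$ with $\langle p-q,h\rangle\ge 0$ for every $h\ge 0$. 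Specializing to $u=f\wedge g$ and $v=f\vee g$, the Fenchel--Young inequalities $\E(f)\ge\E(f\vee g)+\langle q,f-f\vee g\rangle$ and $\E(g)\ge\E(f\wedge g)+\langle p,g-f\wedge g\rangle$, together with $f-f\vee g=-(g-f)^+$ and $g-f\wedge g=(g-f)^+$, sum to
\[\E(f)+\E(g)\ge\E(f\vee g)+\E(f\wedge g)+\langle p-q,(g-f)^+\rangle\ge\E(f\vee g)+\E(f\wedge g),\]
which is \cref{eq:minmax}. If one prefers to avoid subdifferential selections, an equivalent implementation works with the resolvent $J_\tau=(I+\tau\partial\E)^{-1}$: order-preservation of $J_\tau$ plus the variational characterization of $J_\tau f$ as minimizer of $u\mapsto\E(u)+\frac{1}{2\tau}\|u-f\|_2^2$ gives the inequality for $J_\tau$-regularizations, and one concludes by letting $\tau\to 0$ using that $\E(J_\tau f)\to\E(f)$ for $f\in\overline{D(\E)}$.

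The main obstacle is exactly this final step: while the correspondence between \cref{eq:prcr1} and order-preservation of $T_t$ is classical convex analysis on product Hilbert spaces, extracting the \emph{pointwise} inequality \cref{eq:minmax} from the dynamical order-preservation requires either careful selection of subgradients (nontrivial since $\partial\E$ is only set-valued in general) or passage to the limit through the Moreau--Yosida regularization. Everything else, including the forward implication and the simplification of $P_1$, is essentially bookkeeping.
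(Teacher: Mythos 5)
Your first direction --- that convexity together with \cref{eq:minmax} implies \cref{eq:prcr1} via the midpoint identities $P^1_1(f,g)=\frac{1}{2}(f+f\wedge g)$ and $P^2_1(f,g)=\frac{1}{2}(g+f\vee g)$ --- is correct, and it is exactly how the corresponding half of \cref{thmb3} works. The problem is the converse, which is precisely the content the paper delegates to \cite[Proposition 2.5]{barthelemy1996invariance} (\cref{thmb3}) rather than proving by hand, and your proposed mechanism for it does not work. The selection claim ``for $u\le v$ one can pick $p\in\partial\E(u)$, $q\in\partial\E(v)$ with $\langle p-q,h\rangle\ge 0$ for every $h\ge 0$'', i.e.\ $p\ge q$ pointwise, is false: already for $\E(u)=\frac12\int_{\bbR}|u'|^2$ the subdifferential is the single-valued map $u\mapsto -u''$, and taking $v=0$ and $u$ a smooth nonpositive bump gives $u\le v$ but $-u''<0=-v''$ near the minimum of $u$. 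What order preservation actually yields is T-monotonicity, $\langle p-q,(u-v)^+\rangle\ge 0$ for arbitrary $(u,p),(v,q)\in\partial\E$; this is vacuous when $u\le v$, and in your Fenchel--Young computation the increment is tested against $(g-f)^+$, which is not the positive part of $(f\wedge g)-(f\vee g)=-|f-g|$ (that positive part is $0$). Worse, the inequality you ultimately need, $\langle p-q,(g-f)^+\rangle\ge 0$ with $p\in\partial\E(f\wedge g)$ and $q\in\partial\E(f\vee g)$, has the \emph{wrong sign} in the basic bilinear example: there the pairing equals $\Lambda\bigl(f\wedge g-f\vee g,(g-f)^+\bigr)=-\Lambda\bigl(|f-g|,(g-f)^+\bigr)=-\int_{\{g>f\}}|(g-f)'|^2\le 0$, even though \cref{eq:minmax} holds (with equality) for this form. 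So your chain of inequalities cannot close. The resolvent/Moreau--Yosida fallback inherits the same difficulty: order preservation of $J_\tau$ only compares increments against positive parts of the \emph{same} difference, whereas \cref{eq:minmax} requires comparing increments of $\E$ in the direction $(f-g)^+$ at the two ordered base points $g$ and $f\wedge g$ --- a genuinely different statement, and exactly the nontrivial part of \cref{thmb3}. There is also the domain issue you flag ($f\vee g$ and $f\wedge g$ need not lie in $D(\partial\E)$), which the sketch does not resolve.

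A secondary point: as stated, the theorem asserts that \cref{eq:minmax} alone implies convexity, whereas you assume convexity on both sides of the equivalence. On a one-point measure space \cref{eq:minmax} is an identity for every functional, so convexity has to be extracted from the projection inequality (this is how \cref{thmb3} is organised: \cref{eq:prcr1} is equivalent to convexity \emph{plus} \cref{eq:minmax}); that derivation is also absent from your argument. In short: keep the easy direction, but for the converse you should either reproduce the iteration argument behind \cref{thmb3} or simply verify its hypotheses for $C_1$ and cite it, as the paper implicitly does.
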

Indeed, the last statement is a consequence of the more general \cite[Proposition 2.5]{barthelemy1996invariance}, which we will also use.
\begin{thm}
\label{thmb3}
Let $C$ be a closed convex subset of $\mathrm{L}^2(X,m;\mathbb{R}^2)$, let $P_C=(P_C^1,P_C^2)$ be the associated orthogonal projection. Let $\diri: \mathrm{L}^2(X,m) \to [0,\infty]$ be a l.s.c.\ functional.
Let $h,k : \mathrm{L}^2(X,m;\mathbb R^2) \to \mathrm{L}^2(X,m)$ be two continuous mappings such that, for all $u,v \in \mathrm{L}^2(X,m)$ and $t,s \in [0,1]$ it holds that
\begin{align}\label{eq10}
    h(u_t,v_s) &{}= u_{1-s}, &
    k(u_t,v_s) &{}= v_{1-t},
\end{align}
where 
\begin{align*}
u_t &{}= (1-t)u+t h(u,v),&
v_s &{}= (1-s)v+s k(u,v).\end{align*}
Moreover, assume 
\begin{equation}\label{eq11}
P_C(u,v) = (u_{1/2},v_{1/2}).
\end{equation}
Then, we have that for all $u,v \in \mathrm{L}^2(X,m)$
\[\cE\left(P_C^{1}(u,v)\right) + \cE\left(P_C^{2}(u,v)\right) \leq \cE(u)+\cE(v),\]
if and only if $\E$ is convex and for all $u,v \in \mathrm{L}^2(X,m)$
\[\cE(h(u,v))+\cE(k(u,v)) \leq \cE(u) + \cE(v).\]
\end{thm}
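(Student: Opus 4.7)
The proof is an equivalence and I would treat each direction separately. For the easy direction, assuming $\cE$ is convex and that $\cE(h(u,v)) + \cE(k(u,v)) \leq \cE(u) + \cE(v)$, I would exploit the hypothesis $P_C(u,v) = (u_{1/2}, v_{1/2})$ together with the explicit expressions $u_{1/2} = (u + h(u,v))/2$ and $v_{1/2} = (v + k(u,v))/2$ obtained by setting $t = s = 1/2$ in the definitions. These realise each projection component as a midpoint, so applying convexity of $\cE$ on each component and summing gives
\[
\cE(u_{1/2}) + \cE(v_{1/2}) \leq \tfrac{1}{2}(\cE(u) + \cE(v)) + \tfrac{1}{2}(\cE(h(u,v)) + \cE(k(u,v))),
\]
and the assumed inequality on $(h,k)$ then bounds the right-hand side by $\cE(u) + \cE(v)$.

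For the converse, assume the projection inequality. The first key step is to evaluate it at arbitrary pairs $(u_t, v_s)$ with $(t,s) \in [0,1]^2$: using the joint hypothesis on $h, k$ and the affineness of the flow in its parameter, one computes
\[
P_C(u_t, v_s) = \bigl(u_{(t+1-s)/2}, v_{(s+1-t)/2}\bigr),
\]
so the projection inequality produces the two-parameter family
\[
\cE(u_{(t+1-s)/2}) + \cE(v_{(s+1-t)/2}) \leq \cE(u_t) + \cE(v_s).
\]
A second structural observation is that specialising the hypothesis at $(t,s) = (1,1)$ yields the involution property $T \circ T = \mathrm{Id}$ for the operator $T(u,v) := (h(u,v), k(u,v))$, and therefore $P_C(T(u,v)) = P_C(u,v)$. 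The plan is to combine the two-parameter family with the lsc of $\cE$ and continuity of $h, k$ to extract midpoint convexity of $\cE$ along the flow segments $\{u_t\}$ and $\{v_s\}$, then promote this to convexity of $\cE$ itself by lsc. Once convexity is available, applying the projection inequality at both $(u,v)$ and $T(u,v)$ and combining with Jensen's inequality on the midpoint representation $P_C = (\mathrm{Id} + T)/2$ should deliver the $(h,k)$-inequality; this is the strategy already used abstractly in \cite{barthelemy1996invariance}.

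The principal obstacle is deriving the convexity of $\cE$ from the projection inequality alone. A naive attempt that merely compares the projection inequality at $(u,v)$ and at $T(u,v)$ yields only two lower bounds on $\cE(u_{1/2}) + \cE(v_{1/2})$ (one by each instance), which is not enough to deduce an upper bound on $\cE(h(u,v)) + \cE(k(u,v))$. The argument must instead exploit the full strength of the two-parameter family above, weaving together instances for different $(t,s)$ and passing to limits along the flow via the continuity of $h, k$ and lsc of $\cE$; this is where I expect the most technical care is required in reconstructing the proof.
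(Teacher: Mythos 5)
You should first note that the paper does not actually prove \cref{thmb3}: it is imported as is from \cite[Proposition 2.5]{barthelemy1996invariance}, so there is no internal proof to compare against. Your ``if'' direction is correct and complete: by \cref{eq11} each component of $P_C(u,v)$ is the midpoint of $u$ and $h(u,v)$, respectively of $v$ and $k(u,v)$, and convexity plus the $(h,k)$-inequality close the estimate.

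The converse is where all the content lies, and there your plan has a genuine gap, not just a technical one. The two-parameter family $\cE(u_{(t+1-s)/2})+\cE(v_{(s+1-t)/2})\le\cE(u_t)+\cE(v_s)$ is correctly derived (apply \cref{eq11} to the pair $(u_t,v_s)$ and use \cref{eq10} together with the affineness of $t\mapsto u_t$), but it provably cannot yield the $(h,k)$-inequality: by \cref{eq10} at $t=s=1$ the flow of the swapped pair $(h(u,v),k(u,v))$ is the same segment traversed backwards (e.g.\ $(u_1)_\tau=u_{1-\tau}$), so the collection of pairs $(u_t,v_s)$, and hence the entire two-parameter family, is invariant under the substitution $(u,v)\mapsto(h(u,v),k(u,v))$. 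A set of inequalities symmetric under this swap cannot distinguish $\cE(h(u,v))+\cE(k(u,v))\le\cE(u)+\cE(v)$ from its reverse. Concretely, with $A(t)=\cE(u_t)$ and $\tilde B(r)=\cE(v_{1-r})$ the family reads $A(\tfrac{t+r}{2})+\tilde B(\tfrac{t+r}{2})\le A(t)+\tilde B(r)$, which gives midpoint convexity of $A+\tilde B$ and upper bounds on $A(\tfrac12)+\tilde B(\tfrac12)$, but never compares $A(1)+\tilde B(0)$ with $A(0)+\tilde B(1)$. Your fallback via ``Jensen on $P_C=(\mathrm{Id}+T)/2$'' also points the wrong way: convexity bounds $\cE(P^1_C(u,v))$ \emph{above} by $\tfrac12(\cE(u)+\cE(h(u,v)))$, whereas extracting an upper bound on $\cE(h(u,v))$ would require the reverse. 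Finally, midpoint convexity along flow segments does not upgrade to convexity of $\cE$, since the segments $[u,h(u,v)]$ do not realise arbitrary directions (for the order cone one always has $h(u,v)=u\wedge v\le u$). To complete this direction you would have to use the projection inequality on pairs that are not of the form $(u_t,v_s)$, following the dynamic mechanism of \cite{barthelemy1996invariance} (invariance of $C$ under the associated evolution, converted into the $(h,k)$-inequality by the twist condition \cref{eq10}); as written, the harder implication is not established.
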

\begin{rem}
Note that, given \cref{th:cipriani,thm:minmax}, it is easy to deduce that every non-bilinear Dirichlet form satisfies \cref{eq:minmax,eq:HK}, which is the direction of \cref{thm1} we will use for proving \cref{th:contraction}. Indeed,
\[H_\alpha(f,g)=\frac12P^1_{2,\alpha}(f,g)+\frac12P^2_{2,\alpha}(g,f)\]
for all $\alpha\ge 0$ and $f,g\in\mathrm{L}^2(X,m)$, so that convexity and \cref{eq:prcr2} give
\begin{multline*}
\E(H_\a(f,g))+\E(H_\a(g,f))\\\begin{aligned}\le{}& \frac12\left(\E(P^1_{2,\a}(f,g))+\E(P^2_{2,\a}(g,f))+\E(P^1_{2,\a}(g,f))+\E(P^2_{2,\a}(f,g))\right)\\\le{}& \E(f)+\E(g).\end{aligned}
\end{multline*}
\end{rem}

\subsection{Proof of Theorem~\ref{thm1}}
To conclude the section, we show that the convex sets $C_{2,\alpha}$ verify the hypotheses of \cref{thmb3}.

\begin{proof}[Proof of \cref{thm1}]
Fix $\a>0$. Recalling the explicit expression of $\varphi_\a$ from \cref{eq:def:phia}, for any $u,v\in \mathrm{L}^2(X,m)$ we have
\begin{equation*}
    \varphi_\alpha\circ (u-v) (x) = \begin{cases}
                    u(x)-v(x)-\alpha&u(x)-v(x) \leq -\alpha, \\ 
                    2u(x)-2v(x)&|u(x)-v(x)| \leq \alpha, \\
                    u(x)-v(x)+\alpha &u(x)-v(x) \geq \alpha. \\
                    \end{cases} 
\end{equation*}
Further recalling the expression of $P_{2,\alpha}$ from \cref{eq:def:P2a}, in order to satisfy \cref{eq11}, we now choose $h,k : \mathrm{L}^2(X,m;\mathbb{R}^2) \to \mathrm{L}^2(X,m)$ such that 
\begin{align*}
v + \dfrac{1}{2}\varphi_\alpha\circ(u-v) &{}= \dfrac{u + h(u,v)}{2},&
u - \dfrac{1}{2}\varphi_\alpha\circ (u-v) &{}= \dfrac{v + k(u,v)}{2}.
\end{align*}
Therefore, the expressions for $h,k$ are the following
\begin{align*}
    h(u,v)(x) &{}= \begin{cases}
                    v(x)-\alpha&u(x)-v(x) \leq -\alpha, \\ 
                    u(x)&|u(x)-v(x)| \leq \alpha, \\
                    v(x)+\alpha&u(x)-v(x) \geq \alpha, \\
                    \end{cases} 
          \\
    k(u,v)(x) &{}= \begin{cases}
                    u(x)+\alpha&u(x)-v(x) \leq -\alpha, \\ 
                    v(x)&|u(x)-v(x)| \leq \alpha, \\
                    u(x)-\alpha&u(x)-v(x) \geq \alpha,
                    \end{cases}               
\end{align*}
and we notice that $h(u,v)=H_\a(u,v)$ and $k(u,v)= H_\alpha(v,u)$.

It remains to verify the twist condition \cref{eq10}.
Fix $s, t, u, v$ as in the hypothesis. Since the values of $H_\alpha$ is defined pointwise, we also fix $x \in X$ and drop this parameter for compactness of notation. Suppose that $|u - v| \leq \alpha$, then $H(u, v) = u, H(v, u) = v$, so $u_t=u_{1-s}=u$, $v_s=v$. The case $u - v < -\alpha$ is analogous to that with $u - v > \alpha$, since the role of $u$ and $v$ is
symmetric. Hence, we will discuss only the former. Here we have
\begin{align*}
u_t &{}= (1 - t)u + t(v - \alpha),&
v_s &{}= (1 - s)v + s(u + \alpha).
\end{align*}
We need not discuss more subcases for the expression of $H_\alpha(u_t, v_s)$, since
\begin{align*}
u_t - v_s &{}= (1 - t)u + t(v - \alpha) - (1 - s)v - s(u + \alpha) \\
&{}= (1 - t - s)(u - v) -(t+s)\alpha <-\alpha.
\end{align*}
Hence,
\[H_\alpha(u_t, v_s) = v_s - \alpha = (1 - s)v + su + (s - 1)\alpha=u_{1-s},\]
The second condition in \cref{eq10} follows similarly, so we omit it. Thus, applying \cref{thmb3}, \cref{eq:prcr2} is equivalent to $\cE$ being convex and \cref{eq:HK}. Yet, \cref{thm:minmax} gives that the convexity and \cref{eq:prcr1} are equivalent to \cref{eq:minmax}, so \cref{thm1} reduces to \cref{th:cipriani}.
\end{proof}

\section{The normal contraction property}
\label{sec:contraction}
Throughout this section we fix a measure space $(X,m)$ and a functional on $\mathrm{L}^2(X,m)$ satisfying  symmetry and \cref{eq:minmax,eq:HK} for all $f,g\in\mathrm{L}^2(X,m)$ and $\a\in[0,\infty)$.

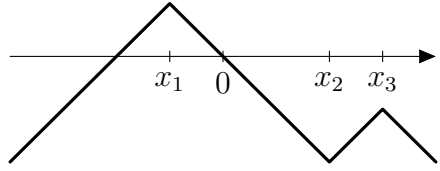
\begin{figure}
    \centering
\begin{tikzpicture}[line cap=round,line join=round,>=triangle 45,x=0.7cm,y=.7cm]
\draw[->] (-4,0) -- (4,0);
\draw[shift={(-1,0)}] (0pt,2pt) -- (0pt,-2pt) node[below] {$x_1$};
\draw (0pt,2pt) -- (0pt,-2pt) node[below] {$0$};
\draw[shift={(2,0)}] (0pt,2pt) -- (0pt,-2pt) node[below] {$x_2$};
\draw[shift={(3,0)}] (0pt,2pt) -- (0pt,-2pt) node[below] {$x_3$};
\draw[very thick] (-4,-2)-- (-1,1)--(2,-2)--(3,-1)--(4,-2);
\end{tikzpicture}
    \caption{Graph of the function $\phi_{x_1,x_2,x_3}$.}
    \label{fig:phi}
\end{figure}
We will prove the normal contraction property \cref{eq:normalcontraction} progressively, starting from simple functions $\phi$. More specifically, for $k\in\{0,1,\dots\}$, $x_1,\dots,x_k\in\bbR$ such that $-\infty=x_0<x_1<\dots<x_k<x_{k+1}=\infty$, we consider the continuous function $\phi_{x_1,\dots,x_k}:\bbR\to\bbR$ (see \cref{fig:phi}) defined by $\phi_{x_1,\dots,x_k}(0)=0$ and
\begin{equation}
\label{eq:def:phi}\phi'_{x_1,\dots,x_k}(x)=(-1)^{i}
\end{equation}
for $x\in(x_i,x_{i+1})$. Let us denote $F_k=\{\phi_{x_1,\dots,x_k}:x_1<\dots<x_k \in\bbR\}$, so that $F_0=\{\mathrm{id}\}$. We further set $\Phi_{x_1,\dots,x_k}=\cE\circ\phi_{x_1,\dots,x_k}$.

\subsection{Basic contractions}
\begin{prop}
\label{prop:one-cusp}
For any $x\in\bbR$ and $f\in\mathrm{L}^2(X,m)$ we have $\Phi_{x}(f)\le \cE(f)$.
\end{prop}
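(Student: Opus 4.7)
The plan is to prove $\cE(\phi_x(f)) \le \cE(f)$ by combining one application each of \cref{eq:HK} and \cref{eq:minmax} with cleverly chosen auxiliary functions $g \in \mathrm{L}^2(X, m)$, and then summing the two resulting inequalities so that the unwanted companion energies cancel.

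First I reduce to $x \ge 0$. The identity $\phi_x(t) = |x| - |t - x| = \phi_{-x}(-t)$ gives $\phi_x(f) = \phi_{-x}(-f)$, so combined with symmetry $\cE(-f) = \cE(f)$ this reduces the case $x < 0$ to the case $-x > 0$ applied to $-f$. The case $x = 0$ is immediate: $\phi_0(f) = -|f|$, and \cref{eq:minmax} applied to $(f, -f)$ combined with symmetry gives $2\cE(|f|) \le 2\cE(f)$, whence $\cE(\phi_0(f)) = \cE(|f|) \le \cE(f)$.

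For $x > 0$, the main step rests on two pointwise identities verified by elementary case analysis on the sign of $f$ and on whether $f \le x$. Applying \cref{eq:HK} with $g = -|f|$ and $\alpha = 2x$, one checks from \cref{eq:def:H} that $H_{2x}(f, -|f|) = \phi_x(f)$ and $H_{2x}(-|f|, f) = \phi_{0, x}(f)$; combined with $\cE(-|f|) = \cE(|f|)$, this yields
\begin{equation*}
\cE(\phi_x(f)) + \cE(\phi_{0, x}(f)) \le \cE(f) + \cE(|f|). \tag{$\star$}
\end{equation*}
Applying \cref{eq:minmax} with $g = -\phi_{0, x}(f) \in \mathrm{L}^2(X, m)$, one checks pointwise that $f \vee (-\phi_{0, x}(f)) = |f|$ and $f \wedge (-\phi_{0, x}(f)) = \phi_x(f)$, so that
\begin{equation*}
\cE(|f|) + \cE(\phi_x(f)) \le \cE(f) + \cE(\phi_{0, x}(f)). \tag{$\dagger$}
\end{equation*}
Summing $(\star)$ and $(\dagger)$ cancels the companion terms $\cE(|f|)$ and $\cE(\phi_{0, x}(f))$, leaving $2\cE(\phi_x(f)) \le 2\cE(f)$.

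The main obstacle is not the algebra but identifying $g = -\phi_{0, x}(f)$ as the right auxiliary function for \cref{eq:minmax}. The naive symmetric choice $g = -f$ only recovers $\cE(|f|) \le \cE(f)$, and constant $g$'s are unavailable since constants do not lie in $\mathrm{L}^2(X, m)$ for general $\sigma$-finite $m$. The guiding heuristic is to seek a $g$ whose resulting inequality is complementary to $(\star)$, in the sense that the same pair $\{\cE(|f|), \cE(\phi_{0, x}(f))\}$ appears on the opposite sides of the two inequalities, so that the cancellation is automatic; the pointwise identities $f \vee (-\phi_{0, x}(f)) = |f|$ and $f \wedge (-\phi_{0, x}(f)) = \phi_x(f)$ are then the key computation.
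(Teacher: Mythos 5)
Your proof is correct and follows essentially the same route as the paper's: one application of \cref{eq:minmax} and one of \cref{eq:HK} with $\alpha=2x$, combined with symmetry, only with different auxiliary functions ($|f|$ and $\phi_{0,x}(f)$ in place of the paper's $0\vee f$ and a reflected half-line function $\s$) and a sum-and-cancel step instead of a chained estimate. The one point worth making explicit is that cancelling $\cE(|f|)$ and $\cE(\phi_{0,x}(f))$ requires their finiteness, which follows from your $x=0$ argument and $(\star)$ once one assumes $\cE(f)<\infty$ (the case $\cE(f)=\infty$ being trivial).
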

\begin{figure}
    \centering
\begin{tikzpicture}[line cap=round,line join=round,>=triangle 45,x=0.7cm,y=.7cm]
\draw[->] (-2,0) -- (3,0);
\draw[shift={(1,0)}] (0pt,2pt) -- (0pt,-2pt) node[below] {$x$};
\draw (0pt,2pt) -- (0pt,-2pt) node[below] {$0$};
\draw[color=red,very thick] (-2,-2) node[left] {$\mathrm{id}$}--(3,3);
\draw[color=blue,very thick,dashed] (-2,0) node[left] {$\s$}--(0,0)--(1,1)--(3,-1);
\end{tikzpicture}
\begin{tikzpicture}[line cap=round,line join=round,>=triangle 45,x=0.7cm,y=0.7cm]
\draw[->] (-2,0) -- (3,0);
\draw[shift={(1,0)}] (0pt,2pt) -- (0pt,-2pt) node[below] {$x$};
\draw (0pt,2pt) -- (0pt,-2pt) node[below] {$0$};
\draw[color=green,very thick] (-2,0) node[left] {$0\vee\mathrm{id}$}--(0,0)--(3,3);
\draw[color=magenta,very thick,dashed] (-2,-2) node[left] {$\phi_x$}--(1,1)--(3,-1);
\end{tikzpicture}
\caption{Illustration of \cref{eq:minmax4}.\label{fig:minmax4}}
\end{figure}
\begin{proof}
Fix $x\ge 0$ (the case $x<0$ is treated identically) and $f$. By \cref{eq:minmax}
\begin{equation}
\label{eq:minmax4}\Phi_x(f)+\cE(0\vee f)\le \cE(f)+\cE(\s\circ f)\end{equation}
(see \cref{fig:minmax4}), where 
\[\s(y)=\begin{cases}0&y\le 0,\\
y&y\in(0,x),\\
2x-y&y\ge x.
\end{cases}\]
Thus, it suffices to show that $\cE(0\vee f)\ge \cE(\s\circ f)$.

But symmetry and \cref{eq:HK} with $\a=2x$ (see \cref{fig:HK5}) give
\begin{equation}
\label{eq:HK5}
\begin{aligned}2\cE(\s\circ f)\le{}&\cE(\s\circ f)+\cE(-\s\circ f)\\
\le{}& \cE(0\vee f)+\cE(-(0\vee f))\le2\cE(0\vee f),
\end{aligned}
\end{equation}
concluding the proof.
\end{proof}

\begin{figure}
    \centering
\begin{tikzpicture}[line cap=round,line join=round,>=triangle 45,x=0.7cm,y=.7cm]
\draw[->] (-2,0) -- (4,0);
\draw (0pt,2pt) -- (0pt,-2pt) node[below] {$0$};
\draw[color=red,very thick] (-2,0)--(0,0)--(3,3)node[right] {$0\vee\mathrm{id}$};
\draw[color=blue,very thick,dashed] (-2,0)--(0,0)--(3,-3) node[right] {$-(0\vee\mathrm{id})$}; 
\draw[<->] (1,-1)--(1,1) node[midway,above right] {$2x$};
\end{tikzpicture}
\begin{tikzpicture}[line cap=round,line join=round,>=triangle 45,x=0.7cm,y=0.7cm]
\draw[->] (-2,0) -- (4,0);
\draw[shift={(1,0)}] (0pt,2pt) -- (0pt,-2pt) node[below] {$x$};
\draw (0pt,2pt) -- (0pt,-2pt) node[below] {$0$};
\draw[color=green,very thick] (-2,0)--(0,0)--(1,-1)--(3,1) node[right] {$-\s$};
\draw[color=magenta,very thick,dashed] (-2,0)--(0,0)--(1,1)--(3,-1) node[right] {$\s$};
\phantom{\draw[color=blue,very thick,dashed] (-2,0)--(0,0)--(3,-3) node[right] {$-(0\vee\mathrm{id})$};}
\end{tikzpicture}
\caption{Illustration of \cref{eq:HK5}.\label{fig:HK5}}
\end{figure}

\begin{prop}
\label{prop:one-sided}
For any $0\le x_1<x_2$ or $x_1<x_2\le 0$ and $f\in\mathrm{L}^2(X,m)$ it holds that $\Phi_{x_1,x_2}(f)\le \cE(f)$.
\end{prop}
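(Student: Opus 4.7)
The plan is to reduce to non-negative $g = 0 \vee f$ via \cref{eq:minmax} and then combine two carefully chosen applications of \cref{eq:HK}. I treat the main case $0 \le x_1 < x_2$; the other case $x_1 < x_2 \le 0$ will follow by applying this to $-f$, via the pointwise identity $-\phi_{x_1, x_2}(f) = \phi_{-x_2, -x_1}(-f)$ combined with symmetry of $\cE$, since $0 \le -x_2 < -x_1$. Throughout, set $\alpha = x_2 - x_1$.

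For the reduction, take $v = \phi_{x_1, x_2}(0 \vee f)$. A pointwise case analysis on the regions $\{f \le 0\}$, $\{0 \le f \le x_1\}$, $\{x_1 \le f \le x_2\}$, $\{f \ge x_2\}$ shows $f \vee v = 0 \vee f$ and $f \wedge v = \phi_{x_1, x_2}(f)$. Thus \cref{eq:minmax} yields
\[\cE(\phi_{x_1, x_2}(f)) \le \cE(f) + \cE(\phi_{x_1, x_2}(0 \vee f)) - \cE(0 \vee f),\]
reducing the problem to proving $\cE(\phi_{x_1, x_2}(g)) \le \cE(g)$ for every non-negative $g \in \mathrm{L}^2(X, m)$.

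For the non-negative case, I apply \cref{eq:HK} twice and add. Direct case analysis establishes the identities $H_{2\alpha}(\phi_{x_1}(g), g) = \phi_{x_1, x_2}(g)$ and $H_{2\alpha}(g, \phi_{x_1}(g)) = \phi_{x_2}(g)$, valid for every $g \in \mathrm{L}^2(X, m)$, yielding by \cref{eq:HK}
\[\cE(\phi_{x_1, x_2}(g)) + \cE(\phi_{x_2}(g)) \le \cE(\phi_{x_1}(g)) + \cE(g).\]
Next, when $g \ge 0$, a pointwise computation (which crucially fails without this sign constraint) gives $H_{2x_1}(g, -\phi_{x_2}(g)) = \phi_{x_1, x_2}(g)$ and $H_{2x_1}(-\phi_{x_2}(g), g) = -\phi_{x_1}(g)$; applying \cref{eq:HK} and invoking symmetry via $\cE(-\phi_{x_i}(g)) = \cE(\phi_{x_i}(g))$ then produces
\[\cE(\phi_{x_1, x_2}(g)) + \cE(\phi_{x_1}(g)) \le \cE(g) + \cE(\phi_{x_2}(g)).\]
If $\cE(g) = \infty$ the conclusion is trivial; otherwise \cref{prop:one-cusp} guarantees finiteness of $\cE(\phi_{x_i}(g))$, so adding the two displayed inequalities and cancelling these finite single-cusp terms gives $2 \cE(\phi_{x_1, x_2}(g)) \le 2 \cE(g)$, as desired.

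The main creative step and principal obstacle is the choice of the pair $(g, -\phi_{x_2}(g))$ at scale $2x_1$ in the second application of \cref{eq:HK}. The first application alone is inadequate, since it places $\cE(\phi_{x_2}(g))$ on the wrong side; the purpose of the second is to ``swap'' the roles of $\phi_{x_1}$ and $\phi_{x_2}$ so that the single-cusp contributions cancel. The non-negativity of $g$ is indispensable: for $g < -x_1$ the identity $H_{2x_1}(g, -\phi_{x_2}(g)) = \phi_{x_1, x_2}(g)$ breaks down (the ``extra'' clipping introduced by $H_{2x_1}$ then disagrees with the identity branch of $\phi_{x_1, x_2}$ on $\{f \le x_1\}$), which is exactly why the preliminary reduction to $0 \vee f$ is genuinely required.
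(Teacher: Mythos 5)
Your proof is correct; I checked the pointwise identities underlying each application of \cref{eq:minmax} and \cref{eq:HK} (including the degenerate case $x_1=0$, where the second application of \cref{eq:HK} becomes vacuous but the needed identity $\phi_{0,x_2}=-\phi_{x_2}$ on $[0,\infty)$ still holds), as well as the reduction of $x_1<x_2\le 0$ to $0\le -x_2<-x_1$ via $-\phi_{x_1,x_2}(f)=\phi_{-x_2,-x_1}(-f)$ and symmetry. The route is genuinely different from the paper's in its core step, though built from the same ingredients. Your opening reduction is in fact identical in substance to the paper's closing step: the function you call $\phi_{x_1,x_2}(0\vee f)$ is the paper's $\psi\circ f$, and your first display is \cref{eq:minmax3}; both arguments therefore reduce to showing $\cE(\phi_{x_1,x_2}(g))\le\cE(g)$ for $g=0\vee f$. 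Where you diverge is in how this is done: the paper runs a linear chain, applying \cref{eq:HK} at scale $x_1$ to $(0\vee f,\sigma\circ f)$ for an auxiliary shifted cusp $\sigma$ and then dominating $\cE(\sigma\circ f)$ by $\cE(0\vee(f-x_1))$ via symmetry and \cref{eq:HK} at scale $2(x_2-x_1)$ (\cref{eq:HK4,eq:HK3}), without ever invoking \cref{prop:one-cusp}. You instead use a symmetrisation: two applications of \cref{eq:HK}, at scales $2(x_2-x_1)$ and $2x_1$, chosen so that the single-cusp terms $\cE(\phi_{x_1}(g))$ and $\cE(\phi_{x_2}(g))$ appear on opposite sides, and then you add and cancel, with \cref{prop:one-cusp} supplying the finiteness needed for the cancellation. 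Your version buys a cleaner bookkeeping of possibly infinite terms at that stage; on the other hand, both your reduction and the paper's \cref{eq:minmax3} still cancel $\cE(0\vee f)$ from both sides of an inequality, so both arguments implicitly rely on $\cE(0\vee f)<\infty$ whenever $\cE(f)<\infty$ --- this is a shared (and standard) elision, not a defect specific to your write-up.
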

\begin{figure}
\centering
\begin{tikzpicture}[line cap=round,line join=round,>=triangle 45,x=0.7cm,y=.7cm]
\draw[->] (-4,0) -- (3,0);
\draw[shift={(-1,0)}] (0pt,2pt) -- (0pt,-2pt) node[below] {$x_1$};
\draw[shift={(-2,0)}] (0pt,2pt) -- (0pt,-2pt) node[below] {$0$};
\draw[shift={(2,0)}] (0pt,2pt) -- (0pt,-2pt) node[above] {$x_2$};
\draw[color=red,very thick] (-4,0)-- (-2,0)--(1,3) node[right] {$0\vee\mathrm{id}$};
\draw[color=blue,very thick,dashed]  (-4,0)--(-1,0)--(2,-3)--(3,-2) node[right] {$\s$};
\draw[<->] (-1,0)--(-1,1) node[midway,right] {$x_1$};
\end{tikzpicture}
\begin{tikzpicture}[line cap=round,line join=round,>=triangle 45,x=0.7cm,y=0.7cm]
\draw[->] (-4,0) -- (3,0);
\draw[shift={(-1,0)}] (0pt,2pt) -- (0pt,-2pt) node[below] {$x_1$};
\draw[shift={(-2,0)}] (0pt,2pt) -- (0pt,-2pt) node[below] {$0$};
\draw[shift={(2,0)}] (0pt,2pt) -- (0pt,-2pt) node[above] {$x_2$};
\draw[color=green,very thick] (-4,0)--(-1,0)--(2,3) node[right] {$0\vee(\mathrm{id}-x_1)$};
\draw[color=magenta,very thick,dashed] (-4,0)--(-2,0)--(-1,1)--(2,-2)--(3,-1) node[right] {$\psi$};
\phantom{\draw (0,0)--(0,-3);}
\end{tikzpicture}
\caption{Illustration of \cref{eq:HK4}.\label{fig:HK4}}
\end{figure}
\begin{proof}
Fix $0\le x_1 <x_2$ and $f$, the case $x_1<x_2\le 0$ being analogous. Let 
\begin{align*}
\s(x)&{}=\begin{cases}
0&x\le x_1,\\
x_1-x&x\in(x_1,x_2),\\
x+x_1-2x_2&x\ge x_2,
\end{cases}&
\psi(x)&{}=\begin{cases}
0&x\le 0,\\
\phi_{x_1,x_2}(x)&x>0.\end{cases}
\end{align*}
Then \cref{eq:HK} with $\a=x_1$ (see \cref{fig:HK4}) gives
\begin{equation}
\label{eq:HK4}
\cE(\psi\circ f)+\cE(0\vee (f-x_1))\le \cE(0\vee f)+\cE(\s\circ f).
\end{equation}
Moreover, by symmetry and \cref{eq:HK} for $\a=2(x_2-x_1)$ (see \cref{fig:HK3}) we get
\begin{equation}
\label{eq:HK3}
\begin{aligned}
2\cE(0\vee (f-x_1))\ge{}&\cE(0\vee(f-x_1))+\cE(0\wedge (x_1-f))\\
\ge{}& \cE(\s\circ f)+\cE(-\s\circ f)\ge2\cE(\s\circ f),
\end{aligned}
\end{equation}
so that $\cE(\psi\circ f)\le \cE(0\vee f)$. Furthermore, \cref{eq:minmax} gives
\begin{equation}
\label{eq:minmax3}
\Phi_{x_1,x_2}(f)+\cE(0\vee f)\le \cE(\psi\circ f)+\cE(f)
\end{equation}
(see \cref{fig:minmax3}), yielding the desired conclusion.
\end{proof}
\begin{figure}
    \centering
\begin{tikzpicture}[line cap=round,line join=round,>=triangle 45,x=0.7cm,y=.7cm]
\draw[->] (-4,0) -- (4,0);
\draw[shift={(-1,0)}] (0pt,2pt) -- (0pt,-2pt) node[below] {$x_1$};
\draw[shift={(-2,0)}] (0pt,2pt) -- (0pt,-2pt) node[below] {$0$};
\draw[color=red,very thick] (-4,0)--(-1,0)--(2.5,3.5) node[right] {$0\vee(\mathrm{id}-x_1)$};
\draw[color=blue,very thick,dashed] (-4,0) --(-1,0)--(2.5,-3.5)node[right] {$0\wedge(x_1-\mathrm{id})$};
\draw[<->] (2,3)--(2,-3) node[midway,above right] {$2(x_2-x_1)$};
\end{tikzpicture}
\begin{tikzpicture}[line cap=round,line join=round,>=triangle 45,x=0.7cm,y=0.7cm]
\draw[->] (-4,0) -- (4,0);
\draw[shift={(-1,0)}] (0pt,2pt) -- (0pt,-2pt) node[below] {$x_1$};
\draw[shift={(-2,0)}] (0pt,2pt) -- (0pt,-2pt) node[below] {$0$};
\draw[shift={(2,0)}] (0pt,2pt) -- (0pt,-2pt) node[above] {$x_2$};
\draw[color=green,very thick] (-4,0)--(-1,0)--(2,3)--(3,2) node[right] {$-\s$};
\draw[color=magenta,very thick,dashed] (-4,0)--(-1,0)--(2,-3)--(3,-2) node[right] {$\s$};
\phantom{\draw (0,0)--(0,-3.5) node[right]{$(x_1$};}
\end{tikzpicture}
\caption{Illustration of \cref{eq:HK3}.\label{fig:HK3}}
\end{figure}

\begin{figure}[ht]
\centering
\begin{tikzpicture}[line cap=round,line join=round,>=triangle 45,x=0.7cm,y=.7cm]
\draw[->] (-4,0) -- (3,0);
\draw[shift={(-1,0)}] (0pt,2pt) -- (0pt,-2pt) node[below] {$x_1$};
\draw[shift={(-2,0)}] (0pt,2pt) -- (0pt,-2pt) node[below] {$0$};
\draw[shift={(2,0)}] (0pt,2pt) -- (0pt,-2pt) node[above] {$x_2$};
\draw[color=red,very thick] (-4,-2) node[left] {$\mathrm{id}$}--(1,3);
\draw[color=blue,very thick,dashed] (-4,0) node[left] {$\psi$}--(-2,0)--(-1,1)--(2,-2)--(3,-1);
\end{tikzpicture}
\begin{tikzpicture}[line cap=round,line join=round,>=triangle 45,x=0.7cm,y=0.7cm]
\draw[->] (-4,0) -- (3,0);
\draw[shift={(-1,0)}] (0pt,2pt) -- (0pt,-2pt) node[below] {$x_1$};
\draw[shift={(-2,0)}] (0pt,2pt) -- (0pt,-2pt) node[below] {$0$};
\draw[shift={(2,0)}] (0pt,2pt) -- (0pt,-2pt) node[above] {$x_2$};
\draw[color=green,very thick] (-4,0) node[left] {$0\vee\mathrm{id}$}-- (-2,0)--(1,3);
\draw[color=magenta,very thick,dashed] (-4,-2) node[left] {$\phi_{x_1,x_2}$}--(-1,1)--(2,-2)--(3,-1);
\end{tikzpicture}
\caption{Illustration of \cref{eq:minmax3}.\label{fig:minmax3}}
\end{figure}

\begin{prop}
\label{prop:two-sided}
For any $x_1<0<x_2$ and $f\in\mathrm{L}^2(X,m)$ it holds that $\Phi_{x_1,x_2}(f)\le\cE(f)$.
\end{prop}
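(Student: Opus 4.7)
The argument splits into the symmetric sub-case $x_2=-x_1$ and the properly asymmetric one.

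For the symmetric sub-case, set $a:=x_2=-x_1>0$ and apply \cref{eq:HK} to the pair $(f,-f)$ with $\alpha=2a$. A pointwise case analysis on the three regions $\{|f|\le a\}$, $\{f>a\}$ and $\{f<-a\}$ identifies $H_\alpha(f,-f)=-\phi_{-a,a}(f)$ and $H_\alpha(-f,f)=\phi_{-a,a}(f)$. Combining \cref{eq:HK} with $\cE(-f)=\cE(f)$ and symmetry yields $2\cE(\phi_{-a,a}(f))\le 2\cE(f)$, which settles this sub-case.

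For the asymmetric case, the pointwise identity $\phi_{x_1,x_2}(-y)=-\phi_{-x_2,-x_1}(y)$, together with symmetry, lets me assume without loss of generality that $-x_1\le x_2$; the opposite case follows by applying the result to $-f$ with the kinks relabelled as $x_1'=-x_2$ and $x_2'=-x_1$. Under this assumption I plan to mimic the three-step scheme of \cref{prop:one-sided}: with $\alpha=x_2-x_1$, construct auxiliary normal contractions $\psi,\sigma\colon\bbR\to\bbR$ by splicing pieces of $\phi_{x_1,x_2}$, constants and tents, arranged so that the pointwise identity $H_\alpha(\psi\circ f,\sigma\circ f)=\phi_{x_1,x_2}(f)$ holds. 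Then \cref{eq:HK} exhibits $\cE(\phi_{x_1,x_2}(f))$ on its left-hand side; the twin term $\cE(H_\alpha(\sigma\circ f,\psi\circ f))$, together with $\cE(\psi\circ f)$ and $\cE(\sigma\circ f)$, should be controlled by $\cE(f)$ through \cref{prop:one-cusp} and \cref{prop:one-sided} applied to one-sided contractions of $f$, possibly combined with symmetry. A concluding application of \cref{eq:minmax} would absorb any residual slack and deliver $\Phi_{x_1,x_2}(f)\le\cE(f)$.

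The main obstacle is the explicit engineering of $\psi$ and $\sigma$. Unlike in \cref{prop:one-sided}, the two-sided $\phi_{x_1,x_2}$ is neither odd, nor equal to the identity on any half-line containing $0$: its local maximum at $x_1<0$ and its local minimum at $x_2>0$ straddle the origin. Consequently the pointwise $H_\alpha$-identity must be verified across all three slope regions of $\phi_{x_1,x_2}$, and the strictly asymmetric regime $|x_1|<x_2$ may further split according to the positions of $2x_1$, $-x_2$ and similar thresholds relative to the cusps. The verification itself is elementary but delicate, paralleling the figure-based checks carried out in the proofs of \cref{prop:one-cusp} and \cref{prop:one-sided}.
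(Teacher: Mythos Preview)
Your symmetric sub-case $x_2=-x_1=a$ is correct: the identifications $H_{2a}(f,-f)=-\phi_{-a,a}\circ f$ and $H_{2a}(-f,f)=\phi_{-a,a}\circ f$ hold pointwise, and \cref{eq:HK} together with symmetry closes it. The asymmetric case, however, is not a proof but a programme with its decisive step left blank: you announce auxiliary contractions $\psi,\sigma$ (with $\alpha=x_2-x_1$) ``to be constructed'' and explicitly flag this construction as ``the main obstacle''. Without concrete $\psi,\sigma$ there is nothing to verify, and the intended appeal to \cref{prop:one-sided} is already suspect, since that result only treats cusps lying on one side of the origin, whereas the residual terms your scheme would produce typically retain the two-sided geometry of $\phi_{x_1,x_2}$.

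The paper sidesteps the obstacle with a single explicit choice. Under $x_2\ge -x_1$ (your WLOG reduction is correct and in fact subsumes the symmetric sub-case), apply \cref{eq:HK} with $\alpha=2x_2$ to the pair $(f,\psi\circ f)$, where
\[
\psi(y)=\begin{cases}y-2x_1&y<x_1,\\-y&x_1\le y\le x_2,\\-x_2&y>x_2.\end{cases}
\]
A pointwise check gives $H_{2x_2}(f,\psi\circ f)=f\wedge x_2$ and $H_{2x_2}(\psi\circ f,f)=\phi_{x_1,x_2}\circ f$, hence $\Phi_{x_1,x_2}(f)+\cE(f\wedge x_2)\le\cE(f)+\cE(\psi\circ f)$. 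The key observation is the factorisation $\psi=\phi_{x_1}\circ(\mathrm{id}\wedge x_2)$: \cref{prop:one-cusp} applied to $f\wedge x_2$ then yields $\cE(\psi\circ f)\le\cE(f\wedge x_2)$, and the inequality follows. No \cref{prop:one-sided}, no \cref{eq:minmax}, no three-step scheme.
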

\begin{figure}
\centering
\begin{tikzpicture}[line cap=round,line join=round,>=triangle 45,x=0.7cm,y=.7cm]
\draw[->] (-4,0) -- (3,0);
\draw[shift={(-1,0)}] (0pt,2pt) -- (0pt,-2pt) node[below] {$x_1$};
\draw (0pt,2pt) -- (0pt,-2pt) node[below] {$0$};
\draw[color=red,very thick] (-3,-3) node[left] {$\mathrm{id}$}-- (3,3);
\draw[color=blue,very thick,dashed] (-4,-2) node[left] {$\psi$}--(-1,1)--(2,-2)--(3,-2);
\draw[<->] (2,2)--(2,-2) node[midway,above left] {$2x_2$};
\end{tikzpicture}
\begin{tikzpicture}[line cap=round,line join=round,>=triangle 45,x=0.7cm,y=0.7cm]
\draw[->] (-4,0) -- (3,0);
\draw[shift={(-1,0)}] (0pt,2pt) -- (0pt,-2pt) node[below] {$x_1$};
\draw[shift={(2,0)}] (0pt,2pt) -- (0pt,-2pt) node[below] {$x_2$};
\draw (0pt,2pt) -- (0pt,-2pt) node[below] {$0$};
\draw[color=green,very thick] (-3,-3) node[left] {$\mathrm{id}\wedge x_2$}-- (2,2)--(3,2);
\draw[color=magenta,very thick,dashed] (-4,-2) node[left] {$\phi_{x_1,x_2}$}--(-1,1)--(2,-2)--(3,-1);
\end{tikzpicture}
\caption{Illustration of \cref{eq:HK2}.\label{fig:HK2}}
\end{figure}
\begin{proof}
Without loss of generality assume that $x_2>-x_1$ and fix $f$. Consider 
\[\psi(x)=\begin{cases}
x-2x_1&x<x_1,\\
-x&x_1\le x\le x_2,\\
-x_2&x>x_2.
\end{cases}\]
Then \cref{eq:HK} with $\a=2x_2$ (see \cref{fig:HK2}) gives
\begin{equation}
\label{eq:HK2}
\Phi_{x_1,x_2}(f)+\cE(f\wedge x_2)\le \cE(f)+\cE(\psi\circ f).
\end{equation}
Yet, $\psi=\phi_{x_1}\circ(\mathrm{id}\wedge x_2)$, so by \cref{prop:one-cusp} we have $\cE(\psi\circ f)\le \cE(f\wedge x_2)$. Combining this with \cref{eq:HK2} yields the desired conclusion.
\end{proof}

\subsection{Reduction to basic contractions}
As we will see, the next proposition is essentially \cref{lemma1}.
\begin{prop}
\label{prop:reduction}
Any $\phi\in F_k$ with $k\ge 0$ can be written as $\phi^1\circ\dots\circ\phi^{\lfloor k/2\rfloor}\circ \psi$ with $\phi^i\in F_2$ for all $i\in\{1,\dots,\lfloor k/2\rfloor\}$ and $\psi\in F_{k-2\lfloor k/2\rfloor}$.
\end{prop}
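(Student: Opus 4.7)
I would prove the proposition by induction on $k$. The base cases $k \in \{0,1\}$ are trivial: take $\psi = \phi$ and no outer factors. For the inductive step with $k \ge 2$, the entire argument reduces to a ``peel-off'' lemma: every $\phi \in F_k$ admits a factorization $\phi = \phi^1 \circ \tilde\phi$ with $\phi^1 \in F_2$ and $\tilde\phi \in F_{k-2}$. Applying the induction hypothesis to $\tilde\phi$ and prepending $\phi^1$ gives the claimed form with $\lfloor k/2 \rfloor$ outer $F_2$ factors and innermost $\psi \in F_{k - 2\lfloor k/2\rfloor}$.

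For the peel-off lemma, I would construct $\phi^1$ and $\tilde\phi$ by stripping off the last two breakpoints. Set $p = \phi(x_{k-1})$ and $q = \phi(x_k)$, and assume $k$ is even so that $p > q$ (the other parities and orientations reduce to this case via the evident symmetries). Define $\tilde\phi$ to coincide with $\phi$ on $(-\infty, x_{k-1}]$ and to continue past $x_{k-1}$ as a straight line of slope $+1$ (the slope $\phi$ carried just before $x_{k-1}$); then $\tilde\phi$ has breakpoints exactly $x_1, \ldots, x_{k-2}$ with slopes in the correct $F_{k-2}$ alternating pattern. Take $\phi^1 = \phi_{p,\,2p-q} \in F_2$. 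A segment-by-segment computation then verifies $\phi^1 \circ \tilde\phi = \phi$: $\phi^1$ acts as the identity on $(-\infty, p]$ (matching $\phi$ on $(-\infty, x_{k-1}]$), as the reflection $y \mapsto 2p - y$ on $[p, 2p-q]$ (matching the decreasing segment $[x_{k-1}, x_k]$), and as the shift $y \mapsto y - 2(p-q)$ on $[2p-q, \infty)$ (matching the final increasing ray).

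The main obstacle lies in the normalizations $\tilde\phi(0) = \phi^1(0) = 0$ built into $F_{k-2}$ and $F_2$: the above construction only satisfies them when $x_{k-1} > 0$ and $p \ge 0$. To handle the remaining configurations I would first invoke the symmetries $\phi \mapsto -\phi$ and $\phi \mapsto \phi(-\,\cdot\,)$, which preserve $F_k$ and shift the position of the breakpoints and their values relative to the origin, covering most configurations. Any residual cases---those where neither peeling at the right end nor its mirror at the left end normalizes correctly, such as when both outermost ``teeth'' of $\phi$ are much larger than those in the middle---would be handled by selecting a non-adjacent pair $(x_i, x_j)$ of breakpoints to eliminate: one builds $\tilde\phi \in F_{k-2}$ with breakpoints $\{x_\ell\}_{\ell \notin \{i,j\}}$, whose slopes on the retained segments need not agree with those of $\phi$, and chooses $\phi^1 \in F_2$ whose reflection interval inverts the resulting sign discrepancy. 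Showing that at least one admissible pair always exists for every $\phi \in F_k$, via a short case analysis on the gap widths $x_{i+1} - x_i$ and the sign of the $\phi(x_i)$, is the central technical point.
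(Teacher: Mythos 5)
Your overall strategy (induction plus a ``peel off one $F_2$ factor'' lemma) matches the paper's, but the specific peel you propose does not work, and the failure is more fundamental than the normalization issue you flag. If you strip the last two breakpoints and take $\phi^1=\phi_{p,2p-q}$ as the \emph{outer} factor, you need $\tilde\phi$ to cross the fold levels $p$ and $2p-q$ only at $x_{k-1}$ and $x_k$. But $\tilde\phi$ agrees with $\phi$ on $(-\infty,x_{k-1}]$, and $\phi$ may have an earlier local extremum overshooting the fold interval: e.g.\ for $k=4$ with $x_1=0$, $x_2=10$, $x_3=11$, $x_4=30$ one has $\phi(x_1)=0>\phi(x_3)=-9$, so $\tilde\phi$ re-enters $[p,2p-q]$ near $x_1$ and the composition acquires spurious breakpoints there. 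In this same example the mirrored peel at the left end fails too (since $\phi(x_4)=-28<\phi(x_2)=-10$), so you land squarely in your ``residual cases'', which you leave unproved and yourself call the central technical point; the non-adjacent-pair idea sketched there is not workable as stated, since a single outer $F_2$ fold cannot repair the broken slope alternation on the several segments between two non-adjacent removed breakpoints. A smaller but real issue: the maps $\phi\mapsto-\phi$ and $\phi\mapsto\phi(-\,\cdot\,)$ do \emph{not} individually preserve $F_k$ (each flips the slope at $-\infty$); only their composition does, so you cannot invoke them separately to normalize.

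The missing idea, which is the crux of the paper's proof, is the choice of \emph{which} adjacent pair to remove: take $i$ minimizing the gap $x_{i+1}-x_i$ (made strictly minimal by perturbation), and peel $\phi_{x_i,x_{i+1}}$ off as the \emph{inner} factor, writing $\phi=\phi_{x_1',\dots,x_{k-2}'}\circ\phi_{x_i,x_{i+1}}$. Minimality of the gap guarantees that the images of all remaining breakpoints under $\phi_{x_i,x_{i+1}}$ lie outside the folded interval $[2x_i-x_{i+1},x_i]$ (in the relevant coordinates) and are attained exactly once, so no spurious breakpoints appear; the three-way case analysis on the position of $0$ relative to $x_i,x_{i+1}$ then fixes the normalization of the outer factor. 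Without an argument of this kind guaranteeing that the fold is never re-entered, your proof has a genuine gap.
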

\begin{proof}
We proceed by induction on $k$. The statement is trivial for $k\in\{0,1,2\}$. Assume that $\phi=\phi_{x_1,\dots,x_k}\in F_k$ for $k\ge 3$, with $-\infty=x_0<x_1<\dots<x_k<x_{k+1}=\infty$. Consider $i\in\{1,\dots,k-1\}$ such that $x_{i+1}-x_i< x_{j+1}-x_j$ for all $j\neq i$ (we may assume that the inequality is strict by perturbing the $x_i$ and taking a limit if necessary). We consider the following cases.
\begin{itemize}
    \item If $x_{i+1}\le 0$, then set
    \[x'_j=\begin{cases}x_j+2(x_{i+1}-x_i)&1\le j<i,\\
    x_{j+2}&i\le j\le k-2.
    \end{cases}\]
    \item If $x_{i}\ge 0$, then set
    \[x'_j=\begin{cases}
    x_j&1\le j<i,\\
    x_{j+2}-2(x_{i+1}-x_i)&i\le j\le k-2.
    \end{cases}\]
    \item If $x_i<0<x_{i+1}$, then set
    \[x'_j=\begin{cases}
    x_j-x_i&1\le j<i,\\
    x_{j+2}-x_{i+1}&i\le j\le k-2.
    \end{cases}\]
\end{itemize}
Then it suffices to prove that
\[\phi=\phi_{x'_1,\dots,x'_{k-2}}\circ\phi_{x_i,x_{i+1}}.\]
To do this, we verify \cref{eq:def:phi} in each case. We will only treat the case $x_i\ge 0$, the others two being analogous. We have that 
\begin{equation}
\label{eq:derivatives}
\phi'_{x'_1,\dots,x'_{k-2}}(\phi_{x_i,x_{i+1}}(x))\times \phi'_{x_i,x_{i+1}}(x)
\end{equation}
changes sign at $x_i$ and $x_{i+1}$ due to the second factor. Moreover, $\phi_{x_i,x_{i+1}}$ takes the values in $I=\bbR\setminus[2x_i-x_{i+1},x_i]$ exactly once and
\[\phi_{x_i,x_{i+1}}(x_j)=\begin{cases}x'_{j}&1\le j< i,\\
x'_{j-2}& i+2\le j\le k.\end{cases}\]
But our choice of $i$ implies $I\supset\{x'_1,\dots,x'_{k-2}\}$,
so the first factor in \cref{eq:derivatives} changes sign precisely at $x_1,\dots,x_{i-1},x_{i+2},\dots,x_k$, concluding the proof.
\end{proof}
With \cref{prop:reduction} it is immediate to deduce \cref{lemma1}.
\begin{proof}[Proof of \cref{lemma1}]
Observe that $G=\{\mathrm{id},-\mathrm{id}\}\circ(F_0\cup F_1\cup F_2)$. Thus,
\begin{equation}
\label{eq:G}
\langle G\rangle\supset\{\mathrm{id},-\mathrm{id}\}\circ\langle F_2\rangle\circ(F_0\cup F_1)\supset\{\mathrm{id},-\mathrm{id}\}\circ\bigcup_{k=0}^\infty F_k\supset\langle G\rangle,
\end{equation}
where the first and third inclusions follow by definition, while the second one is \cref{prop:reduction}. Thus, $\langle G\rangle=\{\mathrm{id},-\mathrm{id}\}\circ\bigcup_{k=0}^\infty F_k$. It therefore remains to show that $\langle G\rangle$ is dense in $\Phi$, in order to conclude the proof.

To this extent, note that any $\phi \in \Phi$ coincides with its $1-$Lipschitz envelope, i.e., 
\[\phi(x) = \inf_{y\in\mathbb R} \phi(y) + |x-y|, \quad \forall x \in \mathbb{R}.\]
By continuity, 
\[\phi(x) = \inf_{y\in \mathbb{Q}} \phi(y) + |x-y|, \quad \forall x \in \mathbb{R}.\]
Taking a sequence of  finite sets $(Q_n)_n \uparrow \mathbb{Q}$  with $Q_0=\{0\}$, we can approximate $\phi$ with $\phi_n\in -\mathrm{id}\circ F_{2k_n-1}$ for some $k_n\in\{1,\dots,|Q_n|\}$ given by
\[\phi_n(x) := \inf_{y \in Q_n} \phi(y) + |x-y|, \quad \forall x \in \mathbb{R}.\] The limit $\phi_n \to \phi$ is in uniform convergence  on compact sets thanks to equi-continuity, so the proof is complete.
\end{proof}
We are ready to assemble the proof of \cref{th:contraction}.
\begin{proof}[Proof of \cref{th:contraction}]
By \cref{thm1}, any non-bilinear Dirichlet form $\cE$ satisfies \cref{eq:HK,eq:minmax} and is l.s.c. Since symmetry is a hypothesis of \cref{th:contraction}, together with \cref{prop:two-sided,prop:one-sided,prop:one-cusp} it yields that for any $\phi\in G$ (recall \cref{lemma1}) and $f\in\mathrm{L}^2(X,m)$ it holds that $\cE(\phi\circ f)\le \cE(f)$. Indeed, $F_0$ is trivial, \cref{prop:one-cusp} deals with $F_1$, \cref{prop:one-sided,prop:two-sided} give $F_2$ and then symmetry allows us to take opposites. Therefore, the normal contraction property \cref{eq:normalcontraction} also holds for all $\phi\in\langle G\rangle$. 

Fix $f\in\mathrm{L}^2(X,m)$ and an arbitrary normal contraction $\phi\in\Phi$. By \cref{lemma1}, there exists a sequence $\phi_n\in\langle G\rangle$ such that $\phi_n(x) \to \phi(x)$ for all $x \in \mathbb{R}$, as $n\to\infty$, and 
\[\E(\phi_n\circ f) \leq \E(f)\] for all $n$. We have that $\phi_n(f) \to \phi(f)$ pointwise in $X,$ but 
\[|\phi_n \circ f|^2 \leq |f|^2 \in \mathrm{L}^1(X,m),\]
as all functions $\phi_n$ are normal contractions. Then, by Lebesgue's dominated convergence theorem
\[\phi_n\circ f \to \phi \circ f\]
in $\mathrm{L}^2(X,m).$ Thus, we obtain the desired inequality via the l.s.c.\ of $\E$. 
\end{proof}

\subsection{Locality}
\label{sec:local}
Let us conclude this section with a concept of locality allowing a much more direct proof of \cref{th:contraction} under this hypothesis. We say that a non-bilinear Dirichlet form $\E$ is \emph{local} if for all $c\in\bbR$ and $u,v \in \mathrm{L}^2(X,m)$ such that $u(x)(v(x)-c)=0$ for all $x\in X$, we have 
\[\diri(u+v)=\diri(u)+\diri(v).\]
\begin{proof}[Proof of \cref{th:contraction} in the local case]
Fix a symmetric local non-bilinear Dirichlet form $\cE$. As in the proof of \cref{th:contraction} it suffices to establish the normal contraction property \cref{eq:normalcontraction} for all $\phi\in\bigcup_{k=1}^\infty F_k$ (this part of the proof does not rely on \cref{thm1,prop:one-cusp,prop:one-sided,prop:two-sided,prop:reduction}). Fix $\phi=\phi_{x_1,\dots,x_k}$ for some $x_1<\dots<x_k$. Observe that 
\[\phi(x)=((x-x_1)\wedge 0)+\sum_{i=1}^k (-1)^i((0\vee (x-x_i))\wedge (x_{i+1}-x_i)).\]
Since all summands satisfy the locality condition, we get 
\begin{align*}
\cE(\phi\circ u)&{}=\cE((u-x_1)\wedge 0))+\sum_{i=1}^k \cE\left((-1)^i((0\vee (u-x_i))\wedge (x_{i+1}-x_i))\right)\\
&{}=\cE((u-x_1)\wedge 0))+\sum_{i=1}^k \cE((0\vee (u-x_i))\wedge (x_{i+1}-x_i))=\cE(u),\end{align*}
using symmetry and locality for the second and third equalities.
\end{proof}

\section{Future directions}
\label{sec:future}
Two challenges which are still open are the following. Firstly, we are not aware of any attempt to obtain a structural decomposition analogous to the one of \cite{fukushima2011dirichlet} 
in the non-bilinear setting. Secondly, the theory of \cite{lott2009ricci,sturm2006geometryI,sturm2006geometryII} covers even the case where Cheeger's energy of the metric measure space is a non-bilinear form, while an analogue of \cite{ambrosio2015bakry} for the non-bilinear case is missing. It is our opinion that the subject of metric measure spaces would profit from a study in this direction. 

These two problems are strong motivations behind our paper, as we foresee that the normal contraction property would be crucial in developing such theories. One difficulty we anticipate is the generalisation of the computations in \cite{bakry1985diffusions}, which looks complicated even in the case of Finsler manifolds. Finally, establishing the normal contraction property adds one structural argument in favour of the choice made by Cipriani and Grillo of the generalisation of bilinear Dirichlet forms to the non-bilinear setting.

\let\d\oldd
\let\k\oldk
\let\l\oldl
\let\L\oldL
\let\o\oldo
\let\O\oldO
\let\r\oldr
\let\S\oldS
\let\t\oldt
\let\u\oldu

\section*{Declarations}
The first author has been funded by the European Union’s Horizon 2020 research and innovation program under the Marie Skłodowska-Curie grant agreement No 754362. Partial support has been obtained from the EFI ANR-17-CE40-0030 Project of the French National Research Agency. The second author was supported by ERC Starting Grant 680275 ``MALIG.'' \\
We are grateful to Giuseppe Savar\'e for numerous stimulating and helpful comments  and to the anonymous referee for useful suggestions. We thank the organisers of the CEREMADE Young Researcher Winter School 2022, where part of the work was done.
\\
Data sharing not applicable to this article as no datasets were generated or analysed during the current study.
\\
The authors have no competing interests to declare that are relevant to the content of this article.

\bibliography{ref}
\bibliographystyle{siam}
\end{document}